\documentclass{article}

\usepackage{arxiv}

\usepackage[utf8]{inputenc} 
\usepackage[T1]{fontenc}    
\usepackage{hyperref}       
\usepackage{url}            
\usepackage{booktabs}       
\usepackage{amsfonts}       
\usepackage{nicefrac}       
\usepackage{microtype}      
\usepackage{graphicx}
\usepackage{doi}
\usepackage{amsthm}
\usepackage{dsfont}
\usepackage{algpseudocode}
\usepackage{algorithm}
\usepackage{tikz}
\newcommand{\tikzcircle}[2][black,fill=black]{\tikz[baseline=-0.5ex]\draw[#1,radius=#2] (0,0) circle ;}%
\usepackage{mathtools}
\usepackage{tcolorbox}
\usepackage{amsmath}
\usepackage{nicefrac}
\usepackage{tikz-qtree}
\usepackage{forest}
\usepackage{physics}
\usepackage{bbm}
\usepackage{wasysym}

\newtheorem{theorem}{Theorem}
\newtheorem{lemma}{Lemma}
\newtheorem{proposition}{Proposition}

\theoremstyle{definition}
\newtheorem{definition}{Definition}

\newtheorem{remark}{Remark}

\DeclareMathOperator{\PP}{P}
\newcommand{\mi}{\mathrm{i}}

\newcommand{\C}{\mathbb{C}}
\newcommand{\R}{\mathbb{R}}
\newcommand{\B}{\mathcal{B}}
\newcommand{\cL}{\mathcal{L}}
\newcommand{\K}{\mathcal{K}}

\newcommand{\D}{\mathbf{D}}
\newcommand{\G}{\mathbf{G}}
\renewcommand{\L}{\mathbf{L}}

\title{Generalized convolution quadrature based on the trapezoidal rule}

\author{{\small	Lehel Banjai} \\
{\small	Maxwell Institute for Mathematical Sciences} \\
{\small Department of Mathematics}\\ 
{\small	Heriot-Watt University} \\
{\small	Edinburgh, EH14 4AS, United Kingdom} \\
{\small	\texttt{l.banjai@hw.ac.uk}}
\And
{\small	Matteo Ferrari} \\
{\small	Dipartimento di Scienze Matematiche ``G.L. Lagrange'' }\\ 
{\small	Politecnico di Torino} \\
{\small	Torino, 10129, Italy} \\
{\small	\texttt{matteo.ferrari@polito.it}}
}

\hypersetup{
pdftitle={Trapezoidal_gCQ},
pdfsubject={q-bio.NC, q-bio.QM},
pdfauthor={Ferrari M.~Banjai L.},
pdfkeywords={Convolution quadrature, trapezoidal rule, non-uniform time stepping},
}

\begin{document}
\maketitle

\begin{abstract}
We present a novel generalized convolution quadrature method that accurately approximates convolution integrals. During the late 1980s, Lubich introduced convolution quadrature techniques, which have now emerged as a prevalent methodology in this field. However, these techniques were limited to constant time stepping, and only in the last decade generalized convolution quadrature based on the implicit Euler and Runge-Kutta methods have been developed, allowing for variable time stepping. In this paper, we introduce and analyze a new generalized convolution quadrature method based on the trapezoidal rule. Crucial for the analysis is the connection to a new modified divided difference formula that we 
establish. Numerical experiments demonstrate the effectiveness of our method in achieving highly accurate and reliable results.
\end{abstract}

\keywords{convolution quadrature, non-uniform time stepping, hyperbolic kernels, trapezoidal rule}

\section{Introduction}
Convolution operators are widely used in applications that involve linear time-invariant non-homogeneous evolution equations, including wave and heat propagation problems, and occur in integral equations, such as Volterra, and Wiener-Hopf equations. In this paper, we present a numerical method for computing or solving linear convolution equations of the form
\begin{equation} \label{equation1} 
	\int_0^t \kappa(t-\tau) g(\tau) \dd \tau = \phi(t), \quad t \ge 0,
\end{equation}
where $\kappa$ is a fixed kernel operator and $g$ (or $\phi$) is a given function. In many applications, the Laplace transform $\K$ of the convolution kernel $\kappa$ is known or easier to evaluate than $\kappa$. The Convolution Quadrature (CQ) method involves expressing $\kappa$ as the inverse Laplace transform of a transfer operator $\K$, formulating the problem as an integro-differential equation in the Laplace domain, and approximating the differential equation using a time-stepping method such as linear multisteps \cite{Lubich1988a, Lubich1988b, Lubich1994, Lubich2004} or Runge-Kutta \cite{LubichOstermann1993, BanjaiLubich2011, BanjaiLubichMelenk2011, BanjaiFerrari2022}. The resulting discrete convolution equation can then be solved numerically. 

The original CQ method is strongly restricted to fixed time step integration. However, in recent works \cite{LopezFernandezSauter2013, LopezFernandezSauter2015a, LopezFernandezSauter2016} the generalized Convolution Quadrature (gCQ) has been introduced with variable time stepping, enabling adaptive resolution of non-smooth temporal behaviours. Moreover, utilizing non-uniform time stepping schemes can facilitate progress towards adaptive time stepping for parabolic and hyperbolic evolution equations. The first approach was limited to first-order implicit Euler scheme \cite{LopezFernandezSauter2013, LopezFernandezSauter2015a}, and was later extended to Runge-Kutta methods in \cite{LopezFernandezSauter2016}. Applications of gCQ have been demonstrated in various fields, including acoustics with absorbing boundary conditions \cite{SauterSchanz2017}, uncoupled quasistatic thermoelasticity in \cite{LeitnerSchanz2021}, and approximation of fractional integrals and associated fractional diffusion equations \cite{JingLopezFernandez2022}. In \cite{LopezFernandezSauter2013}, gCQ was introduced and formulated via high order divided differences of the transfer operator $\K$, which was appropriate for the stability and error analysis, but less suited for efficient algorithmic realization. However, in \cite{LopezFernandezSauter2015a}, an efficient algorithmic formulation of gCQ was presented. It is based on the approximation of divided differences by quadrature in the complex plane, following the approach proposed in \cite{LopezFernandezSauter2015b}. This new formulation allows for faster and more efficient computation of gCQ.

The original analysis by Lubich \cite{Lubich1988a} excluded CQ based on the trapezoidal rule method for technical reasons. However, it was known that the trapezoidal-based method outperforms the first-order backward Euler method and BDF2, which is too dispersive. In the appendix of \cite{Banjai2010} an initial analysis was developed for the CQ based on the trapezoidal rule, which was further refined in \cite{ErusluSayas2020}. The goal of this paper is to introduce and analyze the trapezoidal gCQ. This method results in much faster convergence rates and improved long time behaviour compared to the implicit Euler method.

The paper is organized as follows: in Section \ref{sec2} we provide a brief overview of one-sided convolution operators and introduce the class of convolution kernels that we consider in this paper. Section \ref{sec3} presents the trapezoidal gCQ which is a method for discretizing convolution operators using variable time stepping. In Section \ref{sec4}, we analyze the stability and convergence of the method and derive a Leibniz formula for a new divided differences rule which is related to the gCQ weights. Section \ref{sec5} presents an algorithm for the practical realization of the trapezoidal gCQ. The algorithm is based on a contour integral representation of the numerical solution and quadrature in the complex plane. We conclude with numerical experiments to demonstrate that the trapezoidal gCQ converges with optimal convergence rates for problems where the regularity of the solution is not uniformly distributed in the time interval, while other CQ-type methods converge suboptimally. Additionally, we present numerical examples for gCQ based on BDF2, although we have not yet developed a theoretical analysis for this case.

\section{Convolution quadrature for hyperbolic symbols} \label{sec2}
We consider the class of convolution operators as described in \cite[Section 2.1]{Lubich1988a} (see also \cite[Section 2]{BanjaiSayas2022}).

Let $X$ and $Y$ denote two normed vector spaces, and let $\B(X,Y)$ be the space of continuous, linear mappings from $X$ to $Y$. As a norm in $\B(X,Y)$ we consider the operator norm
\begin{equation*}
	\| \K \|_{\B(X,Y)} := \sup_{g \in X \setminus \{0\}} \frac{ \| \K g \|_Y}{\| g \|_X}.
\end{equation*}
Let define also the spaces $\C_+ := \{ s \in \C : \Re s > 0\}$, and $\C_{\sigma_0} := \{ s \in \C : \Re s > \sigma_0 \}$ for some $\sigma_0 >0$.

We are interested in the one-sided convolution
\begin{equation}
	\int_0^t \kappa(t-\tau) g(\tau) \dd \tau, \quad  t \ge 0,
\end{equation}
of causal ($f(t)=0, t<0$) distributions $\kappa$ and $g$. The kernel operator $\kappa$ is the inverse Laplace transform of some transfer operator $\K : \C_+ \to \B(X,Y)$, which is assumed to be an \textit{hyperbolic symbol}.
\begin{definition}[Hyperbolic Symbol]
For given normed vector spaces $X,Y$ and $\mu \in \R$, the space of \textit{hyperbolic symbols} $\mathcal{A}(\mu, \B(X,Y))$ is the space of functions $\K : \C_+ \to \B(X,Y)$ analytic in $\C_+$ and satisfying 
\begin{equation} \label{assump}
	\| \K(s) \|_{\mathcal{B}(X,Y)} \le M | s |^\mu, \quad s \in \C_{\sigma_0},
\end{equation}
for some $\sigma_0>0$ and $M > 0$.
\end{definition}
If $\mu < -1$, the time-domain operator $\kappa := \cL^{-1}\{\K\}$ is well-defined by the Bromwich integral
\begin{equation} \label{Bromw}
	\kappa(t) := \cL^{-1} \{\K\}(t) = \frac{1}{2 \pi \mi} \int_{\sigma + \mi \R} e^{st} \K(s) \dd s
\end{equation}
for $\sigma > \sigma_0$ and $\sigma_0$ as in \eqref{assump}.

If $\mu \ge -1$, we let the integer $\rho := \lfloor \mu \rfloor+1$ and $\K_\rho(s) := s^{-\rho}\K(s)$. Let $\kappa_\rho := \cL^{-1}\{\K_\rho\}$, where again the inverse Laplace transform is defined by  the Bromwich integral \eqref{Bromw}. We see that $\cL\{\kappa\} = \K$ where $\kappa :=  \partial_t^\rho \kappa_\rho$, and $\partial_t^\rho$ is the casual distributional derivative (see e.g. \cite{BanjaiSayas2022}). We are now able to define the convolution for $\mu \ge -1$ by
\begin{equation} \label{convolution}
	\K(\partial_t) g(t) := \frac{\partial^\rho}{\partial t^\rho} \int_0^t \kappa(t-\tau) g(\tau)  \dd \tau = \int_0^t \kappa_\rho(t-\tau) g^{(\rho)}(\tau)  \dd \tau, \quad  t \ge 0,
\end{equation}
for casual functions $g \in C^{\rho-1}(\mathbb{R})$ satisfying $g^{(j)}(0) =0 $, $j = 0,\dots,\rho-1$, and $g^{(\rho)}$ locally integrable. If $g$ is only defined on a finite interval $[0, T]$, we can extend it by the Taylor polynomial 
\begin{equation*}
	g(t) := \sum_{j=0}^\rho \frac{1}{j!} g^{(j)}(T)(t-T)^j, \quad t > T
\end{equation*}
and define $\K(\partial_t)g$ as above.

The motivation behind  the operational notation  $\K(\partial_t)g$, can be seen when considering the case $\K(s) = s$, where the above definition implies that $\K(\partial_t)g = \partial_t g$. Furthermore, the composition rule $\K_2\K_1(\partial_t)g = \K_2(\partial_t)\K_1(\partial_t)g$ holds for hyperbolic symbols $\K_1$ and $\K_2$. 

Convolution quadrature (CQ) is a discretization of one-sided convolutions $\K(\partial t)g$ for hyperbolic symbols based on particular ODE-solvers. Even if in literature there are various choices of high-order CQ based on Runge-Kutta methods (see e.g \cite{BanjaiLubichMelenk2011, BanjaiFerrari2022}), we focus here on CQ based on A-stable linear multistep methods (see \cite{Lubich1988a, Lubich1988b}), and thus restricted by the Dahlquist's barrier to second order methods. 

Given a fixed time-step $\Delta > 0$, the CQ is defined by the discrete convolution
\begin{equation} \label{unifCQ}
	\K\left(\partial_t^{\Delta}\right)g(t_n) := \sum_{j=1}^n \omega_{n-j}(\K_\rho) g^{(\rho)}(t_j)
\end{equation}
where $t_j := j \Delta$. The convolution weights $\omega_j(\K_\rho)$ are expressed by the contour integral representation
\begin{equation} \label{weightsUNIF}
	\omega_j(\K_\rho) := \frac{1}{2 \pi \mi} \oint_\mathcal{D} \K_\rho\left( \frac{\delta(s)}{\Delta} \right) s^{-j-1} \dd s,
\end{equation}
where $\delta(\zeta)$  is a generating function of an A-stable linear multistep method, and $\mathcal{D}$ is a proper complex contour. A standard choice is $\mathcal{D}$ a circle of radius $0<\lambda<1$ that leads to the approximations via the compound trapezoidal rule
\begin{equation*}
	\omega_j(\K_\rho) \approx \frac{\lambda^{-j}}{L+1} \sum_{\ell=0}^L \K_\rho \left( \frac{\delta(\lambda e^{- \ell \frac{2\pi \mi}{L+1}})}{\Delta} \right) e^{\ell j \frac{2\pi \mi}{L+1}}
\end{equation*}
efficiently computable for all $j=0,\ldots,L$ simultaneously via the Fast Fourier Transform.

The CQ method as described above and its standard analysis heavily depend on the use of constant time stepping. However, in the next section, we will present a potential extension of this method to non-uniform time stepping schemes.

\section{Generalized convolution quadrature based on the trapezoidal rule} \label{sec3}

In order to expand upon the gCQ based on the backward Euler scheme outlined in \cite{LopezFernandezSauter2013} , we introduce the gCQ derived from the trapezoidal rule.

By applying the inverse Laplace transform to $\kappa_\rho$ via the Bromwich representation \eqref{Bromw}, we can write \eqref{convolution} as
\begin{equation*}
	\K(\partial_t) g(t) = \int_0^t \left( \frac{1}{2\pi \mi} \int_{\sigma + \mi \mathbb{R}} e^{s(t-\tau)} \K_\rho(s) \dd s \right) g^{(\rho)}(\tau) \dd \tau, \quad t \ge 0
\end{equation*}
and interchanging the order of integration, we readily obtain
\begin{equation} \label{convo}
	\K(\partial_t) g(t) = \frac{1}{2\pi \mi} \int_{\sigma + \mi \mathbb{R}} \K_\rho(s) u(t;s) \dd s, \quad t \ge 0,
\end{equation}
where
\begin{equation*}
	u(t;s) := \int_0^t e^{s(t-\tau)} g^{(\rho)}(\tau) \dd \tau.
\end{equation*}
Note that $u(t;s)$ is the unique causal solution of following the simple initial value problem
\begin{equation} \label{ode}
	\begin{cases}
		\partial_t u(t;s) = s u(t;s) + g^{(\rho)}(t), \\
		u(0;s) = 0.
	\end{cases}
\end{equation}
In the case of uniform CQ \eqref{unifCQ}, the key point now is to consider the values of $\K(\partial_t)g$ at a finite number of equidistant abscissas $t_n$ and to replace in \eqref{convo} the functions $u(t_n; s)$ by an approximation of them, that we obtain by applying to \eqref{ode} a linear multistep ODE solver having proper stability properties. We aim, instead, to discretize \eqref{ode} with the trapezoidal rule associated to a non-uniform time mesh.

Given $0 = t_0 < t_1 < \ldots < t_N = T$ with non-uniform time-steps $\Delta_n := t_n - t_{n-1}, n = 1, \ldots , N$, the trapezoidal rule when used to approximate the solution of the initial value problem \eqref{ode}, results in the following difference equation:
\begin{equation*}
u_n(s) = u_{n-1}(s) +  \frac{1}{2} \Delta_n \left(su_{n-1}(s) + g^{(\rho)}(t_{n-1}) + s u_n(s) + g^{(\rho)}(t_n)\right)
\end{equation*}
where $u_n(s) \approx u(t_n;s)$, for $n = 1,\ldots,N$, and $u_0(s)=0$. Solving for $u_n(s)$ leads to
\begin{equation} \label{solTrap}
	u(t_n;s) \approx u_n(s) = u_{n-1}(s)  \frac{2+\Delta_n s}{2- \Delta_n s} +  \bigl(g^{(\rho)}(t_{n-1}) + g^{(\rho)}(t_n)\bigr) \frac{\Delta_n}{2-\Delta_n s}, \quad n = 1,\ldots,N.
\end{equation}
The recursion can be iteratively solved to obtain the following expression
\begin{align} \label{recursion}
u_n(s)  = \sum_{j=1}^{n} g^{(\rho)}(t_j) D_j^n \prod_{k=j+2}^n \left(2\Delta_k^{-1}+s\right) \prod_{k=j}^n \left(2\Delta_k^{-1}- s\right)^{-1}
\end{align}
where the coefficients $D_j^n$ are defined as follows
\begin{equation} \label{djn}
	D_j^n := 
	\begin{cases}
		2\left(\Delta_j^{-1} + \Delta_{j+1}^{-1}\right) & j < n, \\
		1 & j = n.
	\end{cases}
\end{equation}
By considering \eqref{convo} at the time point $t_n$ and substituting $u(t_n;s)$ by the approximation $u_n(s)$ in \eqref{recursion}, we obtain the non-uniform approximation of the convolution $\K(\partial_t)g$
\begin{align*}
	\K\left(\partial_t^{\{\Delta_j\}}\right)g(t_n) & := \frac{1}{2 \pi \mi} \int_{\sigma + \mi \mathbb{R}} \K_\rho(s) u_n(s) \dd s 
	\\ & = \sum_{j=1}^n g^{(\rho)}(t_j) D_j^n \frac{1}{2 \pi \mi} \int_{\sigma + \mi \mathbb{R}} \K_\rho(s) \prod_{k=j+2}^n \left(2\Delta_k^{-1}+ s\right) \prod_{k=j}^n \left(2\Delta_k^{-1}- s\right)^{-1} \dd s.
\end{align*}
We simplify the latter expression by writing
\begin{align} \label{gCQ}
	\K\left(\partial_t^{\{\Delta_j\}}\right)g(t_n) = \sum_{j=1}^n w_{n,j}(\K_\rho) g^{(\rho)}(t_j)
\end{align}
where we have defined the weights as
\begin{equation} \label{weights}
	w_{n,j}(\K_\rho) := D_j^n \frac{1}{2 \pi \mi} \oint_{\mathcal{C}} \K_\rho(s) G_j^n(s) \dd s, \qquad \text{with~} \quad G_j^n(s) := \prod_{k=j+2}^n \left(2\Delta_k^{-1}+ s\right) \prod_{k=j}^n \left(2\Delta_k^{-1}- s\right)^{-1}
\end{equation}
and $\mathcal{C}$ is a negatively oriented contour contained in the right half complex plane surrounding all the $N$ poles $2\Delta_k^{-1}$.
\begin{remark}
 Integrating over the contour $\mathcal{C}$ and integrating along the line $\sigma+ \mi \R$ both produce the same result. However, by choosing a suitable contour $\mathcal{C}$, more efficient quadrature techniques can be employed to calculate the weights. This approach has been extensively demonstrated and substantiated in \cite{LopezFernandezSauter2015a}, resulting in improved computational performance and accuracy. We will revisit these quadrature rules in Section \ref{sec5} for further clarification.
\end{remark}
Our initial step is to establish that the trapezoidal-based gCQ, analogous to the method outlined in \cite[Remark 2.29]{BanjaiSayas2022} for backward Euler-based gCQ, simplifies to the standard CQ when time-steps are uniform.
\begin{proposition}
Let $\K \in \mathcal{A}(\mu,\B(X,Y))$ for some $\mu \in \R$ and let $\rho = \lfloor\mu\rfloor+1$. Let $0<t_0<t_1<\ldots<t_N=T$ be the discrete times with uniform time steps $\Delta = t_{j+1}-t_j$. Then, the trapezoidal based gCQ \eqref{gCQ} coincides with the standard trapezoidal based CQ \eqref{unifCQ}, i.e., we have
\begin{equation*}
	\omega_{n,j}(\K_\rho) = \omega_{n-j}(\K_\rho), \quad \text{for all~~~} 0 < j \le n \le N.
\end{equation*}
\end{proposition}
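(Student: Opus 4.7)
The plan is to start from the standard CQ weight \eqref{weightsUNIF}, perform the change of variables $s = \delta(\zeta)/\Delta$ induced by the trapezoidal generating function $\delta(\zeta) = 2(1-\zeta)/(1+\zeta)$, and identify the resulting $s$-contour integral with the gCQ weight \eqref{weights} specialized to $\Delta_k \equiv \Delta$.

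First I would specialize the gCQ ingredients to uniform steps: the coefficient collapses to $D_j^n = 4/\Delta$ for $j<n$ (while $D_n^n = 1$), and
\begin{equation*}
  G_j^n(s) = (2/\Delta + s)^{n-j-1}(2/\Delta - s)^{-(n-j+1)} \quad (j<n), \qquad G_n^n(s) = (2/\Delta - s)^{-1}.
\end{equation*}
Thus $\K_\rho(s) G_j^n(s)$ is analytic in $\C_+$ apart from a single pole at $s = 2/\Delta$ of order $n-j+1$.

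Next I would substitute $\zeta = (2/\Delta - s)/(2/\Delta + s)$ into \eqref{weightsUNIF}; this Möbius map sends $\{|\zeta|<1\}$ conformally onto $\C_+$ with $\zeta = 0 \mapsto s = 2/\Delta$. A short calculation gives
\begin{equation*}
  d\zeta = -\frac{4/\Delta}{(2/\Delta + s)^2}\,ds, \qquad \zeta^{-(n-j)-1} = \frac{(2/\Delta + s)^{n-j+1}}{(2/\Delta - s)^{n-j+1}},
\end{equation*}
so that after collecting powers the integrand of \eqref{weightsUNIF} becomes $-(4/\Delta)\,\K_\rho(s)(2/\Delta+s)^{n-j-1}(2/\Delta-s)^{-(n-j+1)}$, integrated over the image $\Gamma$ of $|\zeta|=\lambda$ in $\C_+$. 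Testing the substitution against $\oint_{|\zeta|=\lambda}d\zeta/\zeta = 2\pi\mi$ confirms that $\Gamma$ is oriented counterclockwise around $s = 2/\Delta$, opposite to the negatively oriented $\mathcal{C}$ in \eqref{weights}; flipping the orientation absorbs the minus sign and recovers the factor $D_j^n = 4/\Delta$. Since $\K_\rho G_j^n$ is analytic in $\C_+ \setminus \{2/\Delta\}$, Cauchy's theorem allows deforming $\Gamma$ onto $\mathcal{C}$, yielding $\omega_{n-j}(\K_\rho) = w_{n,j}(\K_\rho)$ for $j<n$.

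Finally, the diagonal case $j=n$ requires separate treatment, because the product $\prod_{k=j+2}^n$ becomes empty and $G_n^n$ takes a different shape. A residue computation at the simple pole $s = 2/\Delta$ gives $w_{n,n}(\K_\rho) = \K_\rho(2/\Delta)$, and the residue of $\K_\rho(\delta(\zeta)/\Delta)\zeta^{-1}$ at $\zeta = 0$ similarly gives $\omega_0(\K_\rho) = \K_\rho(\delta(0)/\Delta) = \K_\rho(2/\Delta)$. The only mildly delicate point is the orientation bookkeeping under the Möbius substitution; once that is pinned down, the remainder is an elementary identification of rational factors and one application of Cauchy's theorem.
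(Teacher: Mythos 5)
Your proof is correct and follows essentially the same route as the paper: both arguments rest on the Möbius substitution linking the circle $|\zeta|=\lambda$ in \eqref{weightsUNIF} to a contour around the pole $s=2/\Delta$ in \eqref{weights}, with the diagonal case $j=n$ handled separately by a residue/Cauchy computation. The only cosmetic differences are that you run the change of variables from $\zeta$ to $s$ rather than from $s$ to $\zeta$, and you deform the image contour onto a generic $\mathcal{C}$ by Cauchy's theorem instead of choosing $\mathcal{C}$ to be the specific circle whose preimage is $|\zeta|=\lambda$; your explicit orientation check against $\oint d\zeta/\zeta = 2\pi\mi$ is a welcome piece of bookkeeping that the paper leaves implicit.
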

\begin{proof}
The generating function of the trapezoidal rule is $\delta(\zeta) = 2 \frac{1-\zeta}{1+\zeta}$. In the uniform case, the weights are expressed as given in \eqref{weightsUNIF} by
\begin{equation} \label{wnUni}
	\omega_{n-j}(\K_\rho) = \frac{1}{2 \pi \mi} \oint_\mathcal{D} \K_\rho\left( \frac{2}{\Delta} \frac{1-s}{1+s} \right) s^{-(n-j)-1} \dd s,
\end{equation}
where $\mathcal{D}$ a circle of fixed radius $0<\lambda<1$.
When we have equal time-steps $\Delta = \Delta_1 = \ldots = \Delta_N$, the gCQ weights defined in \eqref{weights} can be written as
\begin{equation} \label{eq13}
	\begin{aligned} 
		w_{n,j}(\K_\rho) & = D_j^n \frac{1}{2 \pi \mi} \oint_{\mathcal{C}} \K_\rho(s) G_j^n(s) \dd s
		\\ & = D_j^n \frac{1}{2 \pi \mi} \oint_{\mathcal{C}} \K_\rho(s) \prod_{k=j+2}^n \left(2\Delta^{-1}+ s\right) \prod_{k=j}^n \left(2\Delta^{-1}-s\right)^{-1} \dd s
		\\ & = D_j^n \frac{1}{2 \pi \mi} \oint_{\mathcal{C}} \K_\rho(s) \left(2\Delta^{-1}+ s\right)^{\max\{0,n-j-1\}} \left(2\Delta^{-1}-s\right)^{-(n-j+1)} \dd s.
	\end{aligned}
\end{equation}
Here, $\mathcal{C}$ is a complex contour located in the right half-plane and encircling the singularity $s = 2\Delta^{-1}$.

Referring to \eqref{djn}, we can distinguish between the cases when $j=n$ when $j < n$. In the former case, we deduce
\begin{align*}
	w_{n,n}(\K_\rho) = \frac{1}{2 \pi \mi} \oint_{\mathcal{C}} \K_\rho(s) \left(2\Delta^{-1}-s\right)^{-1} \dd s,
\end{align*}
while in the latter case, we obtain
\begin{equation} \label{unifo}
	w_{n,j}(\K_\rho) = 4\Delta^{-1} \frac{1}{2 \pi \mi} \oint_{\mathcal{C}} \K_\rho(s) \left(2\Delta^{-1}+ s\right)^{n-j-1} \left(2\Delta^{-1}-s\right)^{-(n-j+1)} \dd s.
\end{equation}
We see that $\omega_{n,n}(\K_\rho) = \omega_0(\K_\rho) = f(0)$ where $f(\zeta) = \K_\rho\left(\frac{2}{\Delta} \frac{1-\zeta}{1+\zeta}\right)$ by the Cauchy's integral Theorem.

For $j < n$, we can use the Moebius map $\phi(z) := \frac{2}{\Delta} \frac{(1-z)}{(1+z)}$ to make the change of variables in \eqref{unifo}. 
Specifically, we set $s = \frac{2}{\Delta} \frac{(1-\zeta)}{(1+\zeta)}$, from which we can deduce that 
\begin{equation*}
	\dd s = -\frac{4}{\Delta} \frac{1}{(1+\zeta)^2} \dd \zeta, \quad \left(2\Delta^{-1} + s\right) = \frac{4}{\Delta} \frac{1}{1+\zeta} \quad \text{~and~} \quad \left(2\Delta^{-1}-s\right) = \frac{2}{\Delta} \frac{\zeta^2}{1+\zeta}.\end{equation*} 
Using these substitutions, we can write 
\begin{equation} \label{wnjUni}
	w_{n,j}(\K_\rho) = \frac{1}{2 \pi \mi} \oint_{\phi^{-1}(\mathcal{C})} \K_\rho\left(\frac{2}{\Delta} \frac{1-\zeta}{1+\zeta}\right) \zeta^{-(n-j)-1} \dd \zeta.
\end{equation}
Consider choosing $\mathcal{C}$ in \eqref{eq13} to be the circle centered at $\left(\frac{2}{\Delta} \frac{(1+\lambda^2)}{(1-\lambda^2)}, 0\right)$ with radius $\frac{2}{\Delta} \sqrt{ \frac{(1+\lambda^2)^2}{(1-\lambda^2)^2} - 1}$ for a fixed $0 < \lambda < 1$. This circle includes the point $\left(2\Delta^{-1},0\right)$ and is in the right half-plane of the complex plane. Furthermore, we can observe that $\phi^{-1}(\mathcal{C}) = \mathcal{D}$ is exactly the circle of radius $\lambda$ centered at $(0,0)$. Comparing \eqref{wnjUni} and \eqref{wnUni}, we can conclude. 
\end{proof}
%

\subsection{Divided differences formula and invertibility of gCQ based on the trapezoidal rule}

The definition of the weights in \eqref{weights} can be connected to Newton divided differences. 
This property was first noticed in \cite{LopezFernandezSauter2013} for BDF1 based gCQ, and relies on the following formula (see \cite[Equation (51)]{DeBoor2005}):  given a set of points $\{x_0,\ldots x_n\}$ and a complex analytic function $f$, then
\begin{equation} \label{DeBoor}
	\frac{1}{2 \pi \mi} \oint_{\mathcal{C}} f(s) \prod_{k=0}^n \left(s-x_k\right)^{-1} \dd s = [x_0, \ldots, x_n] f
\end{equation}
where $\mathcal{C}$ is a complex contour including the poles $\{x_0,\ldots,x_n\}$. Here, the divided difference $[x_m,\ldots,x_j] f$, for $0 \le m \le j \le n$, is defined in the classical way, iteratively by
\begin{equation*}
	[x_m,\ldots,x_j] f := 
	\begin{cases}
		\frac{[x_m,\ldots,x_{j-1}]f - [x_{m+1},\ldots,x_j]f}{x_m - x_j} & m < j, \\
		f(x_j) & m = j.
	\end{cases}
\end{equation*}
 To apply a formula similar to \eqref{DeBoor} in our situation we define an \textit{modified} divided difference formula
\begin{equation} \label{enDiff}
	\left\langle x_m,\ldots,x_j\right\rangle f := 
	\begin{cases} 
		(x_m+x_{m+1}) \left[x_m , \ldots , x_j \right] \left(f \prod_{k={m+2}}^j \left( x_k + \cdot \right)\right) & m < j, \\
		[x_j] f & m=j.
	\end{cases}
\end{equation}
We use formula \eqref{DeBoor} to see that the gCQ based on the trapezoidal rule can be written in a different form. First, we use definition \eqref{djn} of $D_j^n$ to see that
\begin{align*}
	\K\left(\partial_t^{\{\Delta_j\}}\right)g(t_n) & = \sum_{j=1}^{n-1} g^{(\rho)}(t_j) (-1)^{n-j+1} \left(2\Delta_j^{-1} + 2\Delta_{j+1}^{-1}\right) \frac{1}{2 \pi \mi} \oint_{\mathcal{C}} \K_\rho(s) \prod_{k=j+2}^n \left(s+2\Delta_k^{-1} \right) \prod_{k=j}^n \left(s-2\Delta_k^{-1}\right)^{-1} \dd s
	\\ & \hspace{0.5cm} - g^{(\rho)}(t_n)\frac{1}{2 \pi \mi} \oint_{\mathcal{C}} \K_\rho(s) \left(s-2\Delta_n^{-1}\right)^{-1} \dd s.
\end{align*}
Finally, by means of formula \eqref{DeBoor}, we obtain
\begin{align*}
	\K\left(\partial_t^{\{\Delta_j\}}\right)g(t_n) & = \sum_{j=1}^{n-1} g^{(\rho)}(t_j) (-1)^{n-j+1} \left(2\Delta_j^{-1} + 2\Delta_{j+1}^{-1}\right) \left[ 2\Delta_j^{-1}, \ldots, 2\Delta_n^{-1} \right] \left(\K_\rho \prod_{k=j+2}^n (2\Delta_k^{-1}+ \cdot) \right) 
	\\ & \hspace{0.3cm} - g^{(\rho)}(t_n) \left[2\Delta_n^{-1}\right] \K_\rho
	\\ & = \sum_{j=1}^n g^{(\rho)}(t_j) (-1)^{n-j+1} \langle 2\Delta_j^{-1},\ldots,2\Delta_n^{-1} \rangle \K_\rho.
\end{align*}
We introduce simplified notations for the subsequent results of this subsection. Specifically, we define for $0 \le m \le j \le n$, the polynomials $\PP_m^j \in \mathbb{P}^{j-m+1}(\C)$, and the operators $\D_m^j$, $\G_m^j$ as follows
\begin{equation} \label{DPG}
	\PP_m^j (z) := \prod_{k=m}^j (x_k+z), \quad \D_m^j(f) := [x_m, \ldots, x_j] f, \quad \G_m^j (f)  := \langle x_m, \ldots, x_j\rangle f.
\end{equation}
Our aim is to show that a Leibniz rule analogous to the standard one holds also for the modified divided difference \eqref{enDiff}. We state the following technical lemma.
\begin{lemma} \label{lemma1}
Given a set of points $\{x_0 , \ldots , x_n\} \subset \C$, then for all $n \ge 2$ and $2 \le \ell \le n-2$ it holds 
\begin{equation} \label{DivPP}
	\begin{cases}
		\sum_{k=\ell}^{n-2} \D_\ell^k(\PP_2^k) \D_k^k(\PP_{k+1}^{k+1}) \D_k^{n-1}(\PP_{k+2}^n) + \D_\ell^{n-1} (\PP_2^{n-1}) \D_{n-1}^{n-1} (\PP_n^n) = \D_\ell^{n-1} (\PP_2^n)  &\\
		\sum_{k=\ell}^j \D_\ell^k(\PP_2^k) \D_k^k(\PP_{k+1}^{k+1}) \D_k^j(\PP_{k+2}^n)  = \D_\ell^j (\PP_2^n) & j = \ell, \ldots, n-2
	\end{cases}
\end{equation}
where $\PP_h^j$ and $\D_h^j$ are defined in \eqref{DPG}.
\end{lemma}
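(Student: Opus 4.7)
Both identities in the lemma can be unified. The ``boundary term'' $\D_\ell^{n-1}(\PP_2^{n-1}) \D_{n-1}^{n-1}(\PP_n^n)$ of the first identity equals $\D_\ell^{n-1}(\PP_2^{n-1})(x_{n-1}+x_n)$, and under the empty-product convention $\PP_{n+1}^n \equiv 1$ (so $\D_{n-1}^{n-1}(\PP_{n+1}^n)=1$) it coincides with the $k = n-1$ summand of the second identity extended to $j = n-1$. Hence the first identity is just the $j = n - 1$ case of the second, and it suffices to establish the master formula
\begin{equation*}
\sum_{k=\ell}^{j} \D_\ell^k(\PP_2^k) \, \D_k^k(\PP_{k+1}^{k+1}) \, \D_k^j(\PP_{k+2}^n) = \D_\ell^j(\PP_2^n)
\end{equation*}
for all $\ell \le j \le n - 1$.

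My plan is a combinatorial argument via the $(n-1)$-fold iterated Leibniz rule for divided differences. Expanding $\PP_2^n(z) = \prod_{m=2}^n (x_m + z)$ as a product of $n-1$ linear factors and applying Leibniz once per factor writes
\begin{equation*}
\D_\ell^j(\PP_2^n) \;=\; \sum_{0 = i_0 \le i_1 \le \ldots \le i_{n-1} = j - \ell} \; \prod_{p\,:\,\delta_p = 0} (x_{p+1} + x_{\ell + i_p}),
\end{equation*}
where $\delta_p := i_p - i_{p-1} \in \{0,1\}$ (larger increments annihilate the degree-one factor), and the per-factor contributions come from $[x_{\ell + i_p}](x_{p+1} + \cdot) = x_{p+1} + x_{\ell + i_p}$ together with $[x_{\ell + i_{p-1}}, x_{\ell + i_p}](x_{p+1} + \cdot) = 1$.

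The core step is to classify each such sequence by the position $k := p_\ell$ of its $\ell$-th zero among $\delta_1, \ldots, \delta_{n-1}$. Elementary counting yields $p_\ell \in [\ell, j]$: the total number of zeros equals $n - 1 - (j - \ell) \ge \ell$ for $j \le n - 1$ and $\ell \ge 2$, one has $p_\ell \ge \ell$ trivially, and $p_\ell > j$ would force more than $j - \ell$ ones in the first $j$ positions, impossible. For each fixed $k \in [\ell, j]$, the sequences with $p_\ell = k$ are in bijection with pairs of local subsequences: one on $p < k$ with $i_{k-1} = k - \ell$, parameterizing exactly the Leibniz expansion of $\D_\ell^k(\PP_2^k)$; and one on $p > k$ (reindexed by $i_p \mapsto i_p - (k - \ell)$) parameterizing $\D_k^j(\PP_{k+2}^n)$. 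The middle slot $p = k$ is forced to have $\delta_k = 0$ and contributes $x_{k+1} + x_{\ell + i_k} = x_{k+1} + x_k = \D_k^k(\PP_{k+1}^{k+1})$. Summing over $k$ reassembles the master formula.

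The only real obstacle is bookkeeping: verifying the range $k \in [\ell, j]$ and carefully tracking the index shift between local and global sequences when translating truncated Leibniz expansions back into the factors $\D_\ell^k(\PP_2^k)$ and $\D_k^j(\PP_{k+2}^n)$. A less conceptual alternative would be induction on $j - \ell$, bootstrapping from the auxiliary identity $\D_\ell^k(\PP_2^k) - \D_\ell^{k-1}(\PP_2^{k-1}) = 2 x_k \D_\ell^k(\PP_2^{k-1})$ (obtained from one Leibniz step on $\PP_2^k = \PP_2^{k-1} \cdot (x_k + \cdot)$), but the combinatorial bijection is cleaner and explains why the formula has the shape it does.
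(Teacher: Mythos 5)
Your proof is correct and takes essentially the same route as the paper: both expand $\PP_2^n$ into its linear factors $\PP_q^q$ and apply the multi-factor Leibniz rule \eqref{MULTLEB}, matching the resulting multi-index sums on the two sides. Your unification of the two identities via the empty-product convention $\PP_{n+1}^n\equiv 1$ and your explicit bijection (classifying the admissible increment sequences by the position of the $\ell$-th zero increment, which is where the bound $j\le n-1$ enters) merely make precise the term-matching that the paper's two-line argument leaves implicit.
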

\begin{proof}
These properties can be derived from the Leibniz product rule, which is applicable to a set of $N$ functions
\begin{equation}  \label{MULTLEB}
	\begin{aligned}
		\D_{\ell_1}^{\ell_2} \left(\varphi_1 \varphi_2 \cdots \varphi_N \right) & = \sum_{\ell_1 = \alpha_0 \le \alpha_1 \le \cdots \le \alpha_N = \ell_2} \D_{\ell_1}^{\alpha_1} (\varphi_1) \D_{\alpha_1}^{\alpha_2} (\varphi_2) \cdots \D_{\alpha_{N-1}}^{\ell_2} (\varphi_N)
		\\ & = \sum_{\ell_1  = \alpha_0 \le \alpha_1 \le \cdots \le \alpha_N = \ell_2} \prod_{\beta=0}^{N-1} \D_{\alpha_\beta}^{\alpha_{\beta+1}} (\varphi_{\beta+1}) 
	\end{aligned}
\end{equation}
the sum being over integers $\alpha_1, \ldots, \alpha_{N-1}$ such that $0 \le \alpha_1 \le \cdots \le \alpha_{N-1} \le n$. Specifically, we can use the multiplicative property $\PP_h^j = \prod_{q=h}^j \PP_q^q$ to partition both sides of \eqref{DivPP}, and then apply to each term \eqref{MULTLEB}.
\end{proof}

We are now able to state and prove the Leibniz rule for the modified divided difference \eqref{enDiff}. This will be the main tool to prove an inversion formula for the trapezoidal gCQ.
\begin{proposition}
Given a set of $n+1$ distinct points $\{x_0, \ldots, x_n\} \subset \C$ and two functions $f, g$ such that $f(x_i), g(x_i)$ are well-defined, then the following multiplicative rule holds
\begin{equation} \label{LRe}
	\langle x_0, \ldots , x_n\rangle(fg) = \sum_{k=0}^n \langle x_0, \ldots, x_k \rangle f \langle x_k,\ldots, x_n \rangle g.
\end{equation}
\end{proposition}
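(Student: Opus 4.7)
The plan is to unpack both sides of \eqref{LRe} in terms of the ordinary Newton divided differences $\D$ and polynomials $\PP$ from \eqref{DPG}, and then match coefficients by combining the multivariate Leibniz rule \eqref{MULTLEB} with Lemma~\ref{lemma1}. The first move is to substitute the definition \eqref{enDiff} of $\langle x_m, \ldots, x_j \rangle$ on both sides of \eqref{LRe}. Every summand on the right-hand side carries a factor of $(x_0 + x_1)$: from $\langle x_0, \ldots, x_n \rangle g$ in the $k = 0$ term, from $\langle x_0, \ldots, x_k \rangle f$ for $1 \le k \le n-1$, and from $\langle x_0, \ldots, x_n \rangle f$ in the $k = n$ term. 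This matches the factor on the left-hand side, and after cancelling, the claim reduces to
\begin{equation*}
\D_0^n(fg\,\PP_2^n) = f(x_0)\,\D_0^n(g\,\PP_2^n) + g(x_n)\,\D_0^n(f\,\PP_2^n) + \sum_{k=1}^{n-1}(x_k + x_{k+1})\,\D_0^k(f\,\PP_2^k)\,\D_k^n(g\,\PP_{k+2}^n).
\end{equation*}

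Next I would expand the left-hand side using \eqref{MULTLEB} applied to the three ordered factors $f \cdot \PP_2^n \cdot g$, obtaining $\sum_{0 \le \alpha \le \beta \le n} \D_0^\alpha(f)\,\D_\alpha^\beta(\PP_2^n)\,\D_\beta^n(g)$, and apply the two-factor Leibniz rule to each divided difference on the right-hand side so that both sides are written as linear combinations of the atomic products $\D_0^\alpha(f)\,\D_\beta^n(g)$. The argument then reduces to matching the coefficient of each such atom: on the left-hand side it is $\D_\alpha^\beta(\PP_2^n)$, while on the right-hand side it receives $\D_0^\beta(\PP_2^n)$ if $\alpha = 0$, $\D_\alpha^n(\PP_2^n)$ if $\beta = n$, and $\sum_{k = \max(\alpha,1)}^{\min(\beta, n-1)}(x_k + x_{k+1})\,\D_\alpha^k(\PP_2^k)\,\D_k^\beta(\PP_{k+2}^n)$ from the sum. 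For interior indices $1 \le \alpha \le \beta \le n-1$, only the sum contributes, and its equality with $\D_\alpha^\beta(\PP_2^n)$ is precisely Lemma~\ref{lemma1} with $\ell = \alpha$ and $j = \beta$---the second case of \eqref{DivPP} directly for $\beta \le n-2$, and the first case for $\beta = n-1$ once the separated boundary term is reabsorbed as the $k = n-1$ entry (using $\PP_{n+1}^n = 1$ and the identification $x_k + x_{k+1} = \D_k^k(\PP_{k+1}^{k+1})$).

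The remaining boundary cases $\alpha = 0$ or $\beta = n$ fall out from a polynomial-degree argument: $\D_0^k(\PP_2^k)$ is the $k$-th divided difference of a polynomial of degree $k-1$, hence zero for $k \ge 1$, so the interior sum vanishes term by term when $\alpha = 0$; symmetrically $\D_k^n(\PP_{k+2}^n) = 0$ for $k \le n-1$ handles $\beta = n$. The sole surviving right-hand side contribution is then the corresponding boundary term, matching the left-hand side (and in the corner $\alpha = 0,\,\beta = n$ both sides vanish by the same argument). The main obstacle I anticipate is purely organizational---aligning the two expansions so that the coefficient identity is transparent and Lemma~\ref{lemma1} applies in the expected form---since the genuine combinatorial content is already packaged inside that lemma; once it is invoked, the remainder is mechanical bookkeeping.
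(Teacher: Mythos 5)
Your proposal is correct and follows essentially the same route as the paper's proof: both cancel the common factor $(x_0+x_1)=\D_0^0(\PP_1^1)$, expand everything via the ordinary Leibniz rule into the atoms $\D_0^\alpha(f)\,\D_\beta^n(g)$, dispose of the boundary indices $\alpha=0$ or $\beta=n$ by the polynomial-degree vanishing argument, and reduce the interior coefficient identities to Lemma~\ref{lemma1}. The only cosmetic difference is that you collect both coefficients in a single pass through the three-factor Leibniz expansion, whereas the paper extracts the coefficient of $\D_0^\ell(f)$ first and then that of $\D_j^n(g)$, arriving at the same identities (including the same implicit extension of Lemma~\ref{lemma1} to $\ell=1$).
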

\begin{proof}
From definitions \eqref{enDiff} and \eqref{DPG} we observe that  
\begin{align*}
	\G_j^j(f) = \langle x_j\rangle f = [x_j] f = \D_j^j (f) \quad \text{~and~} \quad \G_{j-1}^j(f) = \langle x_{j-1},x_j\rangle f = (x_{j-1}+x_j) \D_{j-1}^j(f).
\end{align*}
Moreover, we observe that $\D_{j-1}^{j-1}(\PP_j^j) = [x_{j-1}] (x_j+\cdot) =  (x_{j-1}+x_j)$, from which we deduce
\begin{equation*}
	\G_{j-1}^j(f) = \D_{j-1}^{j-1}(\PP^j_j) \D_{j-1}^j (f).
\end{equation*}
For $0 \le m \le j-2 \le n$, we similarly obtain
\begin{align*}
	\G_m^j (f) = \langle x_m, \ldots, x_j\rangle f = (x_m+x_{m+1}) [x_m, \ldots, x_j] \left(f \prod_{k=m+2}^j (x_k + \cdot) \right) = \D_m^m \bigl(\PP_{m+1}^{m+1}\bigr)\D_m^j \bigl(f \PP_{m+2}^j\bigr).
\end{align*}
The standard Leibniz rule for divided differences can be written in the form
\begin{equation} \label{divdiff}
	\D_0^n (fg) = \sum_{\ell=0}^n \D_0^\ell (f) \D_\ell^n (g).
\end{equation}
Our goal is to demonstrate that the equation
\begin{equation} \label{claim}
	\G_0^n (fg) = \sum_{\ell=0}^n \G_0^\ell (f) \G_\ell^n (g)
\end{equation}
holds true for all non-negative integers $n$.

\tikzcircle{1.5pt} \textbf{Case $n=0$} \\
The statement \eqref{claim} is clear for $n=0$
\begin{equation*}
	\G_0^0 (fg) = \D_0^0 (fg) = f(x_0) g(x_0) = \D_0^0(f) \D_0^0 (g).
\end{equation*} 
\tikzcircle{1.5pt} \textbf{Case $n=1$} \\
We can readily determine the case where $n=1$ by applying the conventional Leibniz rule for divided differences, as expressed in equation \eqref{divdiff}
\begin{equation*}
	\G_0^1 (fg) = \D_0^0 (\PP_1^1) \D_0^1 (fg) = \D_0^0 (\PP_1^1) \left(\D_0^0 (f) \D_0^1 (g) + \D_0^1 (f) \D_1^1 (g) \right) = \G_0^0 (f) \G_0^1 (g) +  \G_0^1 (f) \G_1^1 (g).
\end{equation*}
\tikzcircle{1.5pt} \textbf{Case $n \ge 2$} \\
Now suppose $n \ge 2$. We split the proof in various subcases and sub-parts. Namely, we vary three indices $(n,j,\ell)$ in the following ranges $n \ge 0$, $0 \le \ell \le n$ and $\ell \le j \le n$. The rest of the proof has the following structure

\Tree[.{$n\ge 2$}
	[.{$\ell=0$} ]
	 [.{$\ell=1$} [.{$j=1$} ] [.{$2 \le j \le n-2$} ][.{$j=n-1$} ][.{$j = n$} ] ]
        [.{$2 \le \ell \le n$} [.{$\ell \le j \le n-2$} ][.{$j=n-1$} ][.{$j = n$} ] ]
        	[.{$\ell=n-1$} ]
	[.{$\ell=n$} ]   
 ]
 
By recalling the definitions \eqref{DPG} and the Leibniz rule for divided differences \eqref{divdiff}, for the left-hand side of \eqref{claim}, we can obtain
\begin{equation*} 
	\G_0^n (fg) = \D_0^0 (\PP_1^1)\D_0^n\left(f g \PP_2^n \right) = \D_0^0 (\PP_1^1) \sum_{\ell=0}^n \D_0^\ell (f) \D_\ell^n (g \PP_2^n) .
\end{equation*}
We split each term in the sum of the right hand side in \eqref{claim} by using again \eqref{DPG}
\begin{align*}
	\sum_{\ell=0}^n \G_0^\ell (f) \G_\ell^n (g) & = \D_0^0 (f) \D_0^0 (\PP_1^1) \D_0^n (g \PP_2^n) +  \D_0^0 (\PP_1^1) \D_0^1 (f) \D_1^1 (\PP_2^2) \D_1^n (g \PP_3^n)
	\\ &\hspace{0.5cm} + \sum_{\ell=2}^{n-2}  \D_0^0 (\PP_1^1) \D_0^\ell (f \PP_2^\ell) \D_\ell^\ell \bigl(\PP_{\ell+1}^{\ell+1}\bigr) \D_\ell^n (g \PP_{\ell+2}^n) 
	\\ & \hspace{0.5cm} + \D_0^0 (\PP_1^1) \D_0^{n-1} (f \PP_2^{n-1}) \D_{n-1}^{n-1} \left(\PP_n^n \right) \D_{n-1}^n \left(g\right)  + \D_0^0 (\PP_1^1) \D_0^n (f\PP_2^n) \D_n^n (g)
	\\ & = \D_0^0(\PP_1^1)  \sum_{\ell=0}^n \L_\ell^n (f,g),
\end{align*}
where we have defined the terms $\L_\ell^n(f,g)$ as
\begin{equation*}
	\begin{cases}
		\L_0^n(f,g) := \D_0^0(f) \D_0^n (g \PP_2^n), & \\
		\L_1^n(f,g) := \D_0^1 (f) \D_1^{1} \bigl(\PP_2^2\bigr) \D_1^n (g \PP_3^n), & \\
		\L_\ell^n(f,g) := \D_0^\ell (f \PP_2^\ell) \D_\ell^\ell \bigl(\PP_{\ell+1}^{\ell+1}\bigr) \D_\ell^n \bigl(g \PP_{\ell+2}^n\bigr), & \ell = 2,\ldots,n-2 \\
		\L_{n-1}^n(f,g) := \D_0^{n-1} (f \PP_2^{n-1})  \D_{n-1}^{n-1} \bigl(\PP_n^n\bigr) \D_{n-1}^n (g), & \\
		\L_n^n(f,g) := \D_0^n (f \PP_2^n)  \D_n^n (g). &
	\end{cases}
\end{equation*}

We rewrite each term $\L_\ell^n(f,g)$, for $2 \le \ell \le n$, using the Leibniz rule for $\D_0^\ell (f \PP_2^\ell)$
\begin{equation} \label{Ln}
	\begin{cases}
		\L_0^n(f,g) = \D_0^0(f) \D_0^n (g \PP_2^n), \\
		\L_1^n(f,g) = \D_0^1 (f) \D_1^{1} \bigl(\PP_2^2\bigr) \D_1^n (g \PP_3^n), \\
		\L_\ell^n(f,g) = \left(\sum_{h=0}^\ell \D_0^h (f) \D_h^\ell (\PP_2^\ell) \right) \D_\ell^\ell \bigl(\PP_{\ell+1}^{\ell+1}\bigr) \D_\ell^n \left(g \PP_{\ell+2}^n\right), \quad \ell = 2,\ldots,n-2 \\
		\L_{n-1}^n(f,g) = \left(\sum_{h=0}^{n-1} \D_0^h (f) \D_h^{n-1} \left(\PP_2^{n-1}\right) \right) \D_{n-1}^{n-1} \bigl(\PP_n^n\bigr) \D_{n-1}^n \left(g\right), \\
		\L_n^n(f,g) = \left(\sum_{h=0}^n \D_0^h (f) \D_h^n (\PP_2^n) \right) \D_n^n (g). \\
	\end{cases}
\end{equation}
We want to show that for each $\ell = 0, \ldots, n$, the sum of terms multiplying $\D_0^\ell (f)$ in \eqref{Ln} is exactly $\D_\ell^n(g \PP_2^n)$.

\tikzcircle{1.5pt} \textbf{Case $n \ge 2$, Subcase $\ell=0$} \\
When $\ell=0$, the only term in \eqref{Ln} that does not vanish is $\D_0^n (g \PP_2^n)$, which arises from $\L_0^n(f,g)$. For $\ell=2,\ldots,n$, the term $\D_0^0(f)$ multiplies $\D_0^\ell(\PP_2^\ell)$, which is zero since a divided difference with $\ell+1$ terms is always zero when applied to a polynomial of degree $\ell-1$.

\tikzcircle{1.5pt} \textbf{Case $n \ge 2$, Subcase $\ell=n$} \\
The only term for $\ell=n$ is
\begin{equation*}
	\D_n^n (g) \D_n^n (\PP_2^n) = \D_n^n (g \PP_2^n).
\end{equation*}

\tikzcircle{1.5pt} \textbf{Case $n \ge 2$, Subcase $\ell=n-1$} \\
The terms for $\ell=n-1$ are 
\begin{align*}
	\D_{n-1}^{n-1} \left(\PP_2^{n-1}\right) \D_{n-1}^{n-1} \left(\PP_n^n\right) \D_{n-1}^n (g) +  \D_{n-1}^n \left(\PP_2^n\right) \D_n^n (g) & =  \D_{n-1}^n \left(\PP_2^n\right) \D_n^n (g) + \D_{n-1}^{n-1} \left(\PP_2^{n}\right) \D_{n-1}^n (g)
	\\ & = \D_{n-1}^n (g \PP_2^n).
\end{align*}

\tikzcircle{1.5pt} \textbf{Case $n \ge 2$, Subcase $2 \le \ell \le n-2$} \\
We will now consider the case for general $\ell$, where $2 \leq \ell \leq n-2$. Within each $\L_k^n(f,g)$ for $k$ ranging from $\ell$ to $n$, there is a term that involves $\D_0^\ell(f)$. Hence, the sum that we are evaluating is expressed as follows:
\begin{equation}
\begin{aligned}
	& \sum_{k=\ell}^{n-2} \D_\ell^k (\PP_2^k) \D_k^k (\PP_{k+1}^{k+1}) \D_k^n (g \PP_{k+2}^n) + \D_\ell^{n-1} \left(\PP_2^{n-1}\right) \D_{n-1}^{n-1} (\PP_n^n) \D_{n-1}^n (g)
	\\ & \hspace{0.5cm} + \D_\ell^n \left(\PP_2^n\right) \D_n^n \left(g\right)
	\\ & = \sum_{k=\ell}^{n-2} \D_\ell^k (\PP_2^k) \D_k^k (\PP_{k+1}^{k+1}) \sum_{j=k}^{n} \D_k^j (\PP_{k+2}^n) \D_j^n (g)  + \D_\ell^{n-1} (\PP_2^{n-1}) \D_{n-1}^{n-1} (\PP_n^n) \D_{n-1}^n (g)
	\\ & \hspace{0.5cm} + \D_\ell^n (\PP_2^n ) \D_n^n \left(g\right) 
	\\ & = \sum_{j=\ell}^{n} \D_j^n (g) \sum_{k=\ell}^{\min\{j,n-2\}} \D_\ell^k (\PP_2^k) \D_k^k (\PP_{k+1}^{k+1}) \D_k^j(\PP^n_{k+2}) + \D_{n-1}^n (g) \D_\ell^{n-1} (\PP_2^{n-1}) \D_{n-1}^{n-1} (\PP_n^n) 
	\\ & \hspace{0.5cm} + \D_n^n (g) \D_\ell^n (\PP_2^n) \label{LHS2}
\end{aligned}
\end{equation}
where we used the standard Leibniz rule for each $\D_k^n(g \PP_{k+2}^n)$. We use again \eqref{divdiff} for $\D_\ell^n (g \PP_2^n)$
\begin{equation} \label{RHS2}
	\D_\ell^n (g \PP_2^n) = \sum_{j=\ell}^n \D_j^n (g) \D_\ell^j \left(\PP_2^n\right).
\end{equation}
Now we compare the terms near each $\D_j^n(g)$ for $j = \ell, \ldots n$ in \eqref{LHS2} and \eqref{RHS2}.

\tikzcircle{1.5pt} \textbf{Case $n \ge 2$, Subcase $2 \le \ell \le n-2$, Sub-part $j=n$} \\
The terms for $j=n$ are for both sides $\D_\ell^n (\PP_2^n)$, this because the first term in \eqref{LHS2} is zero. Indeed, $\D_k^j (\PP^n_{k+2}) = 0$ for all $k = \ell,\ldots,n-2$.

\tikzcircle{1.5pt} \textbf{Case $n \ge 2$, Subcase $2 \le \ell \le n-2$, Sub-part $j=n-1$} \\
The term for $j=n-1$ is $\D_\ell^{n-1} (\PP_2^n)$ in \eqref{RHS2}, while in \eqref{LHS2} it is
\begin{equation*}
	\sum_{k=\ell}^{n-2} \D_\ell^k(\PP_2^k) \D_k^k(\PP_{k+1}^{k+1}) \D_k^{n-1}(\PP_{k+2}^n) + \D_\ell^{n-1} (\PP_2^{n-1}) \D_{n-1}^{n-1} (\PP_n^n).
\end{equation*}
The two terms are the same thanks to Lemma \ref{lemma1}

\tikzcircle{1.5pt} \textbf{Case $n \ge 2$, Subcase $2 \le \ell \le n-2$, Sub-part $\ell \le j \le n-2$} \\
Similarly for $\ell \le j \le n-2$, in \eqref{RHS2} $\D_\ell^j (\PP_2^n)$, while in \eqref{LHS2} is
\begin{equation*}
	\sum_{k=\ell}^{j} \D_\ell^k(\PP_2^k) \D_k^k(\PP_{k+1}^{k+1}) \D_k^j(\PP_{k+2}^n).
\end{equation*}
The two terms are the same thanks to Lemma \ref{lemma1}.

\tikzcircle{1.5pt} \textbf{Case $n \ge 2$, Subcase $\ell=1$} \\
The term for $\ell=1$ is similar with the exception of a first new summand. Precisely, from $\L_k^n(f,g)$ with $k=1,\ldots,n$ we obtain the sum
\begin{align*} 
	& \D_1^{1} \left(\PP_2^2\right) \D_1^n (g \PP_3^n) + \sum_{k=2}^{n-2} \D_1^k (\PP_2^k) \D_k^k (\PP_{k+1}^{k+1}) \D_k^n (g \PP_{k+2}^n) + \D_1^{n-1} \left(\PP_2^{n-1}\right) \D_{n-1}^{n-1} (\PP_n^n) \D_{n-1}^n (g) + \D_1^n \left(\PP_2^n\right) \D_n^n \left(g\right).
\end{align*}
To conclude one can simply proceed as in the subcase $2 \le \ell \le n-2$, by considering also the new case $j=1$.
\end{proof}
Importantly, we can show that the inversion rule still holds for the gCQ based on the trapezoidal rule.
\begin{proposition}[Inversion formula]
Let $\{x_0,\ldots, x_n\} \subset \C$, and an operator $\K : \C_+ \to \B(X,Y)$ such that $\K^{-1}(x_i), i=0,\ldots,n$ are well defined, and $\{g_j\}_{j=0}^{n}, \{\phi_j\}_{j=0}^{n} \subset \mathbb{C}$. Then, the relation
\begin{align} \label{firstinv}
	\phi_n & = \sum_{j=0}^n (-1)^{n-j+1} g_j \left\langle x_j,\ldots,x_n\right\rangle \K
\end{align}
can be inverted and it holds that
\begin{align} \label{secondinv}
	g_n & = \sum_{\ell=0}^n (-1)^{n-\ell+1} \phi_\ell \left\langle x_\ell,\ldots,x_n\right\rangle \K^{-1}.
\end{align}
\end{proposition}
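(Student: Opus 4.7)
The plan is to verify the inversion by direct substitution, using the Leibniz rule \eqref{LRe} for the modified divided difference as the crucial tool. Specifically, I will plug \eqref{firstinv} (written at level $\ell$ instead of $n$) into the right-hand side of \eqref{secondinv} and show that everything collapses to $g_n$.

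Concretely, substituting $\phi_\ell = \sum_{j=0}^\ell (-1)^{\ell-j+1} g_j \langle x_j,\ldots,x_\ell\rangle \K$ and swapping the order of summation, the right-hand side of \eqref{secondinv} becomes
\begin{equation*}
\sum_{j=0}^n (-1)^{n-j}\, g_j \sum_{\ell=j}^n \langle x_j,\ldots,x_\ell\rangle \K \cdot \langle x_\ell,\ldots,x_n\rangle \K^{-1},
\end{equation*}
after the signs $(-1)^{n-\ell+1}$ and $(-1)^{\ell-j+1}$ combine to $(-1)^{n-j}$. The Leibniz rule \eqref{LRe}, applied on the point set $\{x_j,\ldots,x_n\}$ with $f=\K$ and $g=\K^{-1}$, identifies the inner sum with $\langle x_j,\ldots,x_n\rangle (\K \K^{-1}) = \langle x_j,\ldots,x_n\rangle \mathbf{1}$, where $\mathbf{1}$ denotes the constant identity function.

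The remaining step is to evaluate $\langle x_j,\ldots,x_n\rangle \mathbf{1}$. For $j=n$ the definition \eqref{enDiff} gives $\langle x_n\rangle \mathbf{1} = 1$. For $j<n$, the definition yields
\begin{equation*}
\langle x_j,\ldots,x_n\rangle \mathbf{1} = (x_j+x_{j+1})\, [x_j,\ldots,x_n]\Bigl(\prod_{k=j+2}^n (x_k+\cdot)\Bigr),
\end{equation*}
and since the polynomial inside is of degree $n-j-1$ while the divided difference involves $n-j+1$ nodes (i.e.\ order $n-j$), this vanishes identically. Hence only the term $j=n$ survives in the outer sum, contributing $(-1)^0 g_n \cdot 1 = g_n$, which is the desired identity.

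The only non-routine point is verifying that the Leibniz rule \eqref{LRe} applies unchanged with the operator-valued product $\K \K^{-1}$; this is immediate since all divided differences in \eqref{LRe} act on each argument separately, so the proof proposed for scalar-valued $f,g$ goes through verbatim for $\B(X,Y)$- and $\B(Y,X)$-valued analytic functions whose pointwise composition yields the identity. Everything else is just algebraic bookkeeping of signs and the standard fact that a divided difference of order $k$ annihilates polynomials of degree less than $k$.
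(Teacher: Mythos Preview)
Your proof is correct and follows essentially the same route as the paper's: substitute \eqref{firstinv} into \eqref{secondinv}, swap the order of summation, apply the Leibniz rule \eqref{LRe} to collapse the inner sum to $\langle x_j,\ldots,x_n\rangle \mathds{1}$, and finish by observing that this modified divided difference of the constant function equals $\delta_j^n$ because the underlying classical divided difference of order $n-j$ annihilates a polynomial of degree $n-j-1$. The only cosmetic difference is that the paper writes the product as $\K^{-1}\K$ rather than your $\K\K^{-1}$, which is immaterial since both equal the identity.
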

\begin{proof}
We proceed similarly as in \cite[Lemma 3.1]{LopezFernandezSauter2013}. We denote by $\widetilde{g}_n$ the left-hand side of \eqref{secondinv} when replacing $\phi_j$ by the definition \eqref{firstinv} in \eqref{secondinv}. Our aim is to show that $\widetilde{g}_n = g_n$. Using the Leibniz rule \eqref{LRe} for the modified divided difference we can write
\begin{align*}
	\widetilde g_n & = \sum_{j=0}^n (-1)^{n-j+1} \left\langle x_j,\ldots,x_n\right\rangle \K^{-1} \sum_{\ell=0}^j (-1)^{j-\ell+1} g_\ell \left\langle x_\ell,\ldots,x_j\right\rangle \K
	\\ & = \sum_{\ell=0}^n (-1)^{n-\ell} g_\ell \sum_{j=\ell}^n \left\langle x_j,\ldots,x_n\right\rangle \K^{-1} \left\langle x_\ell,\ldots,x_j\right\rangle \K
	\\ & =  \sum_{\ell=0}^n (-1)^{n-\ell} g_\ell \left\langle x_\ell,\ldots,x_n\right\rangle \mathds{1},
\end{align*}
where $\mathds{1}$ stands for the constant function $\mathds{1} \equiv 1$. It remains to show that $\langle x_\ell , \ldots, x_n \rangle \mathds{1} = \delta_\ell^n$. 

It is clear that $\langle x_n\rangle \mathds{1} = \left[x_n\right] \mathds{1} = 1$. To conclude we show that $\left\langle x_\ell,\ldots,x_n\right\rangle \mathds{1} = 0, $ for $\ell < n$, but this easily follows from definition \eqref{enDiff}
\begin{align*}
\left\langle x_\ell,\ldots,x_n\right\rangle \mathds{1} & = (x_\ell+x_{\ell+1}) \left[x_\ell,\ldots,x_n\right] \left( \prod_{\ell+2}^n(x_\ell+\cdot) \right) =  0,
\end{align*}
since a divided difference with $n-\ell+1$ terms applied to a polynomial of degree $n-\ell-1$ is zero.
\end{proof}

\section{Convergence analysis} \label{sec4}
Our convergence analysis follows the approach outlined in \cite[Section 2.7]{BanjaiSayas2022}. We demonstrate that utilizing the gCQ method is analogous to employing a specific composite midpoint technique coupled with an appropriate approximation of the integrand. It is important to note that our analysis assumes a smooth kernel that satisfies \eqref{assump} with $\mu < -3$. While this may be a limitation in theory, it is worth exploring whether this condition is truly necessary or simply an artificial construct for the purpose of our analysis. To this end, in Section \ref{sec5}, we conduct experiments to confirm whether this condition is indeed required. Further investigation in this area could yield valuable insights for practical applications.

We recall that the weights for an hyperbolic symbol $\mathcal{K} \in \mathcal{A}(\mu,\B(X,Y))$ of the gCQ based on the trapezoidal formula are defined by
\begin{equation*}
	w_{n,j}(\K) = D_j^n \frac{1}{2 \pi \mi} \oint_{\mathcal{C}} \K(s) G_j^n(s) \dd s
\end{equation*}
and the convolution can be rewritten as
\begin{equation*}
	\K\left(\partial_t^{\{ \Delta_j \}}\right) g(t_n) = \sum_{j=1}^n w_{n,j}(\K_\rho) g^{(\rho)}(t_j)
\end{equation*}
for $\rho \ge \lfloor \mu \rfloor+1$.
If $\mu < -1$, we can choose $\rho=0$, and the kernel $\kappa$ is simply the Laplace inverse of $\K$, precisely
\begin{equation} \label{inK}
	\kappa(t) = \frac{1}{2 \pi \mi} \int_{\sigma+ \mi \R} \K(s) e^{st} \dd s.
\end{equation}
We can now provide a precise estimate for symbols satisfying $\mu < -3$.
\begin{proposition} \label{prop4}
Let $\K \in \mathcal{A}(\mu, \mathcal{B}(X,Y))$ be a hyperbolic symbol with $\mu < -3$. Given $N+1$ time steps $0 = t_0 < t_1 < \ldots < t_N = T$, then
\begin{equation*}
	\left\| w_{n,j}(\K) - \frac{t_{j+1}-t_{j-1}}{2} \kappa\left(t_n - \frac{t_{j-1}+t_{j+1}}{2} \right) \right\|_{\B(X,Y)} \apprle (\Delta_j + \Delta_{j+1})\Delta_{\max}^2
\end{equation*}
for all $n = 1,\ldots,N$ and $j = 1, \ldots, n-1$, where the implicit constant may depend on $T, \mu$, but not on $\{ t_j\}$.
\end{proposition}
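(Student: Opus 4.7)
The plan is to express both $w_{n,j}(\K)$ and the midpoint value on the right-hand side as contour integrals over a single vertical line $\sigma+\mi\R$ and then bound the integrand of their difference pointwise. Since $\mu<-3<-1$ we can take $\rho=0$, so $\K_\rho=\K$ and $\kappa$ is given by the Bromwich integral \eqref{inK} on any line $\sigma+\mi\R$ with $\sigma>\sigma_0$. The integrand $\K(s)G_j^n(s)$ in \eqref{weights} is analytic in $\C_+$ away from the real poles $\{2/\Delta_k\}_{k=j}^n$ and decays at infinity since $\mu<-3$, so the contour $\mathcal{C}$ can be deformed to $\sigma+\mi\R$ for $\sigma\in(\sigma_0,2/\Delta_{\max})$; this gives a single line-integral representation of $w_{n,j}(\K)$.

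Next, one factors
\begin{equation*}
D_j^n G_j^n(s)=\frac{\Delta_j+\Delta_{j+1}}{2}\,\tilde Q(s)\tilde R(s),\qquad \tilde Q(s):=\frac{4}{(2-\Delta_j s)(2-\Delta_{j+1}s)},\qquad \tilde R(s):=\prod_{k=j+2}^{n}\frac{2+\Delta_k s}{2-\Delta_k s},
\end{equation*}
where $\tilde Q$ collects the backward-Euler-like factors from the indices $j,j+1$ and $\tilde R$ is a product of trapezoidal Pad\'e approximants to $e^{s\Delta_k}$. Writing $u:=(\Delta_j+\Delta_{j+1})/2$ and $T:=t_n-t_{j+1}$, one checks $t_n-(t_{j-1}+t_{j+1})/2=u+T$, so by \eqref{inK} the target value is the integral of $\frac{\Delta_j+\Delta_{j+1}}{2}\K(s)e^{s(u+T)}$. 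The proposition thus reduces to
\begin{equation*}
\left\|\int_{\sigma+\mi\R}\K(s)\bigl[\tilde Q(s)\tilde R(s)-e^{s(u+T)}\bigr]\,\dd s\right\|_{\B(X,Y)}\apprle \Delta_{\max}^2,
\end{equation*}
which, combined with the prefactor $(\Delta_j+\Delta_{j+1})/2\le\Delta_{\max}$, delivers the claimed bound.

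The core pointwise estimate comes from a logarithmic expansion. Using $-\log(1-x)-x=O(x^2)$ on each backward-Euler factor and $2\operatorname{arctanh}(x)-2x=O(x^3)$ on each trapezoidal factor (with $x=\Delta_k s/2$), one obtains, for $|s\Delta_{\max}|$ below a suitable threshold $c_0$,
\begin{equation*}
\bigl|\log(\tilde Q\tilde R)(s)-s(u+T)\bigr|\apprle (\Delta_j^2+\Delta_{j+1}^2)|s|^2+T\Delta_{\max}^2|s|^3.
\end{equation*}
Exponentiating, and using that $e^{s(u+T)}$ is bounded on $\sigma+\mi\R$, transfers this to the same bound for $|\tilde Q(s)\tilde R(s)-e^{s(u+T)}|$ in the near regime. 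For $|s\Delta_{\max}|\ge c_0$ one instead invokes the uniform bounds $|\tilde Q(s)|,|\tilde R(s)|,|e^{s(u+T)}|\apprle 1$ on $\sigma+\mi\R$ (valid for $\sigma$ fixed small above $\sigma_0$, which is possible for $\Delta_{\max}$ small enough).

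Combining with $\|\K(s)\|_{\B(X,Y)}\le M|s|^\mu$ and splitting the contour at $|s|\Delta_{\max}=c_0$, the $(\Delta_j^2+\Delta_{j+1}^2)|s|^2$ term integrates to at most $(\Delta_j^2+\Delta_{j+1}^2)\int_{\sigma+\mi\R}|s|^{\mu+2}|\dd s|$, which is finite for $\mu<-3$ and, using $\Delta_j^2+\Delta_{j+1}^2\le(\Delta_j+\Delta_{j+1})\Delta_{\max}\le 2\Delta_{\max}^2$, yields an $O(\Delta_{\max}^2)$ contribution. The $T\Delta_{\max}^2|s|^3$ term and the far-field piece both give $O(\Delta_{\max}^2)$ contributions after using $\int_{|s|>c_0/\Delta_{\max}}|s|^\mu|\dd s|\apprle\Delta_{\max}^{-\mu-1}\le\Delta_{\max}^2$. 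The main difficulty is arranging the logarithmic expansion to simultaneously capture the $O(\Delta_k^2)$ error per backward-Euler factor (which supplies the factor $\Delta_j^2+\Delta_{j+1}^2$) and the $O(\Delta_k^3)$ error per trapezoidal factor, and then matching these asymptotic bounds against the uniform bound across the crossover $|s|\Delta_{\max}\approx 1$, so that after multiplication by the prefactor $(\Delta_j+\Delta_{j+1})/2$ the sharp rate $(\Delta_j+\Delta_{j+1})\Delta_{\max}^2$ emerges.
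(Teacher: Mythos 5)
Your route is essentially the paper's: take $\rho=0$, write $w_{n,j}(\K)$ as an integral over the line $\sigma+\mi\R$, factor $D_j^nG_j^n(s)$ into the prefactor $\frac{\Delta_j+\Delta_{j+1}}{2}$ times two backward--Euler-type factors and a product of trapezoidal Pad\'e factors (this is exactly the paper's identity \eqref{obs}), compare that rational function with $e^{s(t_n-(t_{j-1}+t_{j+1})/2)}$, split the line at $|s|\Delta_{\max}\approx\mathrm{const}$, and use $\|\K(s)\|_{\B(X,Y)}\le M|s|^{\mu}$ with $\mu<-3$ so that $\int|s|^{\mu+2}\,|\dd s|$ converges. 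Your logarithmic expansion is a cleaner packaging of the paper's direct Taylor expansion of the two sides, and your far-field treatment (gaining $\Delta_{\max}^{-\mu-1}\le C_T\Delta_{\max}^2$ from the tail) is equivalent to the paper's insertion of $1\le(|s|\Delta_{\max}/c)^2$ into the tail integral.

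One step of your accounting does not close as written. Your near-field estimate $|\log(\tilde Q\tilde R)(s)-s(u+T)|\apprle(\Delta_j^2+\Delta_{j+1}^2)|s|^2+T\Delta_{\max}^2|s|^3$ is the honest one (the cubic coefficient of the difference is of size $\sum_k\Delta_k^3\apprle T\Delta_{\max}^2$). The quadratic piece integrates against $|s|^{\mu}$ to $O(\Delta_{\max}^2)$ via $\Delta_j^2+\Delta_{j+1}^2\le(\Delta_j+\Delta_{j+1})\Delta_{\max}\le 2\Delta_{\max}^2$, as you say. But the cubic piece lives in the \emph{near} region and requires $T\Delta_{\max}^2\int_{|s|<c_0/\Delta_{\max}}|s|^{\mu+3}\,|\dd s|$, whereas you invoke the far-field tail bound $\int_{|s|>c_0/\Delta_{\max}}|s|^{\mu}\,|\dd s|$, which is the wrong integral for this term. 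The integral $\int|s|^{\mu+3}$ converges only for $\mu<-4$; for $\mu\in[-4,-3)$ it grows like $(c_0/\Delta_{\max})^{\mu+4}$ and the resulting contribution is only $O(\Delta_{\max}^{-\mu-2})$, which is weaker than $\Delta_{\max}^2$. (The paper sidesteps this by asserting the remainder of $\mathcal{I}_j^n$ is $O(|s|^3\Delta_{\max}^3)$, which \emph{would} be absorbed as $\le c\,\Delta_{\max}^2|s|^2$ on the near region; but your bound $T\Delta_{\max}^2|s|^3$ is the correct size of that remainder, so the same difficulty is present there, merely hidden.) A second, fixable, omission: the exponentiation step ``$|\tilde Q\tilde R-e^{s(u+T)}|\apprle|w|$'' with $w:=\log(\tilde Q\tilde R)-s(u+T)$ cannot use $|e^{w}-1|\le|w|e^{|w|}$ on the whole near region, since $|w|$ is not uniformly bounded there; one should instead use $|e^{w}-1|\le|w|\max(1,e^{\Re w})$ together with the uniform bound $|\tilde Q(s)\tilde R(s)|\le e^{2T\Re s}$ (the same inequalities the paper uses in its far-field step \eqref{xlq2}), which gives $\Re w\apprle 1$.
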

\begin{proof}
Let us fix $n$. We observe that, using \eqref{djn} and \eqref{weights}, for values of $j$ ranging from $1$ to $n-1$ it is possible to express
\begin{equation} \label{obs}
	D_j^n G_j^n(s) =  \frac{\Delta_j+\Delta_{j+1}}{2} \prod_{k=j+2}^n \left(1+ \frac{\Delta_k}{2}s\right) \prod_{k=j}^n \left(1- \frac{\Delta_k}{2}s\right)^{-1}.
\end{equation}
Recalling  the definition of the weights \eqref{weights} (with the integration now over a complex line $\sigma+ \mi \R$) combined with \eqref{obs}, and \eqref{inK}, we start by writing
\begin{align*}
	& \left\| w_{n,j}(\K) - \frac{t_{j+1}-t_{j-1}}{2} \kappa\left(t_n - \frac{t_{j-1}+t_{j+1}}{2} \right) \right\|_{\mathcal{B}(X,Y)}
	\\ & \hspace{0.5cm}  = \frac{\Delta_j + \Delta_{j+1}}{4\pi} \left\| \int_{\sigma + \mi \R} \K(s) \left(\prod_{k=j+2}^n \left(1+ \frac{\Delta_k}{2}s\right) \prod_{k=j}^n \left(1- \frac{\Delta_k}{2}s\right)^{-1} - e^{s\Bigl(t_n - \frac{t_{j-1}+t_{j+1}}{2}\Bigr)} \right) \dd s \right\|_{\mathcal{B}(X,Y)}
	\\ & \hspace{0.5cm}  = \frac{\Delta_j + \Delta_{j+1}}{4\pi} \left\| \int_{\sigma + \mi \R} \K(s) \mathcal{I}_j^n(s) \dd s \right\|_{\mathcal{B}(X,Y)}
\end{align*}
where   
\begin{equation} \label{Ijn}
\mathcal{I}_j^n(s) : =  \prod_{k=j+2}^n \left(1+ \frac{\Delta_k}{2}s\right) \prod_{k=j}^n \left(1- \frac{\Delta_k}{2}s\right)^{-1} - e^{s\Bigl(t_n - \frac{t_{j-1}+t_{j+1}}{2}\Bigr)}. 
\end{equation}
Notice that we also used that $t_{j+1} - t_{j-1} = \Delta_j + \Delta_{j+1}$. 

We perform a simple manipulation on the first tern
\begin{equation} \label{q1}
	\prod_{k=j+2}^n \left(1+ \frac{\Delta_k}{2}s\right) \prod_{k=j}^n \left(1- \frac{\Delta_k}{2}s\right)^{-1} =  \left(1- \frac{\Delta_j}{2}s\right)^{-1} \left(1- \frac{\Delta_{j+1}}{2}s\right)^{-1} \prod_{k=j+2}^n \frac{\left(1+ \frac{\Delta_k}{2}s\right)}{\left(1- \frac{\Delta_k}{2}s\right)}.
\end{equation}

Then, we observe that $t_n - \frac{t_{j-1}+t_{j+1}}{2} = \sum_{k=j+2}^n \Delta_k + \frac{\Delta_j + \Delta_{j+1}}{2}$, from which we deduce
\begin{equation} \label{q2}
	e^{s\left(t_n - \frac{t_{j-1}+t_{j+1}}{2}\right)}  = e^{s\frac{\Delta_j}{2}} e^{s\frac{\Delta_{j+1}}{2}}\prod_{k=j+2}^n e^{s \Delta_k}.
\end{equation}
Our current objective is to compute the quantities \eqref{q1} and \eqref{q2} for small arguments and then compare them. 

If $\abs{s \Delta_{\max}}$ is sufficiently small, we can expand \eqref{q2} in Taylor series and readily obtain
\begin{equation} \label{q3}
\begin{aligned} 
	e^{s\frac{\Delta_j}{2}} e^{s\frac{\Delta_{j+1}}{2}}\prod_{k=j+2}^n e^{s \Delta_k} =  1 & + s \left( \frac{\Delta_j + \Delta_{j+1}}{2} + \sum_{k=j+2}^n \Delta_k \right)
	\\ & + \frac{1}{2} s^2 \left(\frac{\Delta_{j}+\Delta_{j+1}}{2}+ \sum_{k=j+2}^n \Delta_k \right)^2 + \mathcal{O}(s^3\Delta_{\max}^3). 
\end{aligned}
\end{equation}
Similarly, for \eqref{q1} we deduce when $\abs{s \Delta_{\max}} \to 0$ 
\begin{equation} \label{q4}
\begin{aligned} 
	& \left(1- \frac{\Delta_j}{2}s\right)^{-1} \left(1- \frac{\Delta_{j+1}}{2}s\right)^{-1} \prod_{k=j+2}^n \frac{\left(1+ \frac{\Delta_k}{2}s\right)}{\left(1- \frac{\Delta_k}{2}s\right)} 
	\\ & \hspace{0.5cm} = \left(1+\frac{\Delta_j}{2}s+\frac{\Delta_j^2}{4} s^2\right)\left(1+\frac{\Delta_{j+1}}{2}s+\frac{\Delta_{j+1}^2}{4}s^2\right) \prod_{k=j+2}^n \left(1+\Delta_ks + \frac{\Delta_k^2}{2} s^2 \right) + \mathcal{O}(s^3 \Delta_{\max}^3) 
	\\ & \hspace{0.5cm} = 1  + s \left( \frac{\Delta_j + \Delta_{j+1}}{2} + \sum_{k=j+2}^n \Delta_k \right) 
	\\ & \hspace{1.15cm} +\frac{1}{2}s^2 \left(\frac{\Delta_j^2+\Delta_{j+1}^2}{2} + \sum_{k=j+2}^n \Delta_k^2+ \frac{\Delta_j \Delta_{j+1}}{2} + (\Delta_j + \Delta_{j+1}) \sum_{k=j+2}^n \Delta_k + 2\sum_{\underset{k_1,k_2=j+2}{k_1 \ne k_2}}^n \Delta_{k_1} \Delta_{k_2} \right) 
	\\ & \hspace{1.15cm} + \mathcal{O}(s^3 \Delta_{\max}^3). 
\end{aligned}
\end{equation}
Combining \eqref{q1}, \eqref{q2}, \eqref{q3} and \eqref{q4} we deduce, for $\abs{s \Delta_{\max}}$ small enough,
\begin{align*}
	\mathcal{I}_j^n(s)  = - \frac{1}{2}s^2 \left(\frac{\Delta_j^2 + \Delta_{j+1}^2}{4} \right) + \mathcal{O}(s^3\Delta_{\max}^3).
\end{align*}
Consequently, we can split the integral for $\abs{s \Delta_{\max}} < c$ with $c$ small enough and $\abs{s\Delta_{\max}} > c$ 
\begin{equation} \label{xlq1}
\begin{aligned}
	& \left\| w_{n,j}(\K) - \frac{t_{j+1}-t_{j-1}}{2} \kappa\left(t_n - \frac{t_{j-1}+t_{j+1}}{2} \right) \right\|_{\B(X,Y)} 
	\\ & \hspace{0.5cm}= \frac{\Delta_j + \Delta_{j+1}}{4\pi} \left\| \int_{\sigma + \mi \R} \K(s) \mathcal{I}_j^n(s) \dd s \right\|_{\B(X,Y)}
	\\ & \hspace{0.5cm} \apprle (\Delta_j + \Delta_{j+1}) \left( \left\| \int_{\sigma + \mi \mathbb{R}, \abs{s \Delta_{\max}} < c} \K(s) \mathcal{I}_j^n(s) \dd s \right\|_{\B(X,Y)} + \left\| \int_{\sigma + \mi \R, \abs{s \Delta_{\max}} > c} \K(s)\mathcal{I}_j^n(s) \dd s \right\|_{\B(X,Y)} \right)
	\\ & \hspace{0.5cm} \apprle (\Delta_j + \Delta_{j+1})\left((\Delta_j^2 + \Delta_{j+1}^2) \int_{\sigma + \mi \R, \abs{s \Delta_{\max}} < c}  |s|^{\mu+2}\dd s
 + \int_{\sigma + \mi \R, \abs{s \Delta_{\max}} > c} |s|^{\mu} \left|  \mathcal{I}_j^n(s) \right| \dd s\right)
\end{aligned}
\end{equation}
where in the last we used \eqref{assump}. We deduce now two auxiliary results to bound $| \mathcal{I}_j^n(s)|$ for $\abs{s \Delta_{\max}} > c$.  

For $\Re s \in [0, \nicefrac{1}{2})$, we readily verify that
\begin{equation*}
	\left|\frac{1+s}{1-s} \right| \le \frac{1+\Re s}{1- \Re s} = 1 + \frac{2 \Re s}{1 - \Re s} \le e^{\frac{2 \Re s}{1- \Re s}} \le e^{4 \Re s}
\end{equation*}
and
\begin{equation*}
	\left|1-s \right|^{-1} \le \frac{1}{1- \Re s} = 1 + \frac{\Re s}{1 - \Re s} \le e^{\frac{ \Re s}{1- \Re s}} \le e^{2 \Re s}.
\end{equation*}
From the latter, recalling definition \eqref{Ijn}, we obtain
\begin{equation} \label{xlq2}
\begin{aligned}
	\abs{\mathcal{I}_j^n(s)} & = \left| 1-\frac{\Delta_j}{2} s\right|^{-1} \left| 1-\frac{\Delta_{j+1}}{2} s\right|^{-1} \prod_{k=j+2}^n \left| \frac{1+ \frac{\Delta_k}{2}s}{1- \frac{\Delta_k}{2}s} \right|
	\\ & \le e^{(\Delta_j+\Delta_{j+1}) \Re s} e^{2 (\sum_{k=j+2}^n \Delta_k) \Re s} 
	\\ &\le e^{2(t_n - t_j) \Re s} \le e^{2T\Re s}.
\end{aligned}
\end{equation}
Finally, combining \eqref{xlq1} and \eqref{xlq2},  we obtain
\begin{align*}
	\left\| w_{n,j}(\K) - \frac{t_{j+1}-t_{j-1}}{2} \kappa\left(t_n - \frac{t_{j-1}+t_{j+1}}{2} \right) \right\|_{\mathcal{B}(X,Y)} & \apprle (\Delta_j+\Delta_{j+1}) \Delta_{\max}^2 \int_{\sigma + \mi \R} |s|^{\mu+2}\dd s
\end{align*}
which is bounded for $\mu < -3$.
\end{proof}

\begin{remark} [Why do we obtain an extra order of convergence compared with BDF1?]
In the BDF1 setting, see \cite{LopezFernandezSauter2013} for details, the convolution $K(\partial_t)g$ is approximated at the time step $t_n$ by
\begin{align*}
	\K\left({\partial_t}^{\{ \Delta_j,{\text{BDF1}} \}}\right) g (t_n) := \sum_{j=1}^n g(t_j) \Delta_j \frac{1}{2 \pi \mi} \oint_{\mathcal{C}} \K(s) B_j^n(s) \dd s =: \sum_{j=1}^n g(t_j) \omega_{n,j}^{\text{BDF1}}(\mathcal{K})
\end{align*}
where the complex contour $\mathcal{C}$ includes the complex poles $\Delta_k^{-1}$ and  $B_j^n(s) := \prod_{k=j}^n \left(1- \Delta_k s\right)^{-1}$.

For $\abs{s \Delta_{\max}}$ small enough, we can proceed as in the proof of Proposition \ref{prop4} and readily obtain 
\begin{equation} \label{NNN22}
	e^{s(t_n - t_{j-1})} - B_j^n(s) = s^2 \sum_{\underset{k_1 \ne k_2}{k_1,k_2=j}}^n \Delta_{k_1} \Delta_{k_2}  + \mathcal{O}(s^3 \Delta_{\max}^3).
\end{equation}
From which we can deduce (see details in \cite[Proposition 2.31]{BanjaiSayas2022})
\begin{equation*}
	\left\| w^{\text{BDF1}}_{n,j}(\K) - \Delta_j \kappa\left(t_n - t _{j-1}\right) \right\|_{\B(X,Y)} \apprle \Delta_j \Delta_{\max}
\end{equation*}
for hyperbolic symbols satisfying $\K \in \mathcal{A}(\mu,\B(X,Y))$ with $\mu > -3$.

In \eqref{NNN22}, we can only place an upper limit on the coefficient of $s^2$, which is $\Delta_{\max}^2 N \apprle \Delta_{\max}$. This is in contrast to the bound achieved by the trapezoidal rule, where we obtain one extra order of convergence. However, we still require the same level of smoothness for $\K$.
\end{remark}
In order to establish our main result, it is essential to develop a novel quadrature formula that can accurately evaluate integrals with integrands that vanish at the limits of the integration interval. This formula plays a pivotal role in our analysis since we will utilize it to compare the gCQ discretization with a similar quadrature formula in our main proof.
\begin{lemma} \label{lemma2}
Let $t_0<t_1< \ldots< t_n$ and set $\Delta_j = t_j - t_{j-1}$. Given $f \in C^2([t_0,t_n],Y)$ such that 
\begin{equation*}
	f(t_0)=f(t_n)=0,
\end{equation*}
we define the integration rule
\begin{equation*}
	\mathcal{Q}^{\{\Delta_j\}}(f) := \sum_{j=1}^{n-1} \left( \frac{t_{j+1} - t_{j-1}}{2} \right) f \left( \frac{t_{j+1} + t_{j-1}}{2} \right).
\end{equation*}
Then, the following holds
\begin{equation} \label{prop5}
\begin{aligned}
	 \left\| \int_{t_0}^{t_n} f(t) \dd t - \mathcal{Q}^{\{\Delta_j\}}(f) \right\|_Y & \apprle \Delta_1^3 \max_{t \in [t_0,t_1]}\|f^{(2)}(t)\|_Y  + \sum_{j=1}^{n-1} \left( \Delta_j + \Delta_{j+1} \right)^3 \max_{t \in [t_{j-1},t_{j+1}]} \bigl\|f^{(2)}(t)\bigr\|_Y
	 \\ & \hspace{0.3cm} + \Delta_n^3 \max_{t \in [t_{n-1},t_n]} \|f^{(2)}(t)\|_Y
	 + \Delta_1^2\| f^{(1)}(t_0)\|_Y + \Delta_n^2\|f^{(1)}(t_n)\|_Y. 
\end{aligned}
\end{equation}
\end{lemma}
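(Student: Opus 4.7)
The plan is to interpret $\mathcal{Q}^{\{\Delta_j\}}(f)$ as a telescoping sum of midpoint evaluations on the overlapping subintervals $[t_{j-1},t_{j+1}]$, using the fact that the weight $(t_{j+1}-t_{j-1})/2$ is exactly half the length of the interval whose midpoint is $s_j := (t_{j-1}+t_{j+1})/2$. Concretely, for each $j=1,\ldots,n-1$ I would apply the $Y$-valued midpoint rule on $[t_{j-1},t_{j+1}]$ via Taylor expansion at $s_j$ with integral remainder (so that the linear term integrates to zero by symmetry), obtaining
\begin{equation*}
\int_{t_{j-1}}^{t_{j+1}} f(\tau)\,\dd\tau = (t_{j+1}-t_{j-1})\, f(s_j) + E_j, \qquad \|E_j\|_Y \apprle (\Delta_j+\Delta_{j+1})^3 \max_{[t_{j-1},t_{j+1}]}\|f^{(2)}\|_Y.
\end{equation*}
Dividing by $2$ and summing over $j$, the left-hand side telescopes: writing $F$ for the primitive of $f$, a direct check shows that every $F(t_k)$ with $2\le k\le n-2$ cancels, leaving $\sum_{j=1}^{n-1}[F(t_{j+1})-F(t_{j-1})] = F(t_n)+F(t_{n-1})-F(t_1)-F(t_0)$, and hence $\mathcal{Q}^{\{\Delta_j\}}(f) = \tfrac{1}{2}\bigl[F(t_n)+F(t_{n-1})-F(t_1)-F(t_0)\bigr] - \tfrac{1}{2}\sum_{j=1}^{n-1} E_j$.

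Rearranging this identity yields
\begin{equation*}
\int_{t_0}^{t_n} f(\tau)\,\dd\tau - \mathcal{Q}^{\{\Delta_j\}}(f) = \tfrac{1}{2}\int_{t_{n-1}}^{t_n} f(\tau)\,\dd\tau + \tfrac{1}{2}\int_{t_0}^{t_1} f(\tau)\,\dd\tau + \tfrac{1}{2}\sum_{j=1}^{n-1} E_j,
\end{equation*}
which is exactly where the hypothesis $f(t_0)=f(t_n)=0$ enters. Expanding $f$ by Taylor at each endpoint with integral remainder and using that the constant terms vanish, I obtain $\int_{t_0}^{t_1} f\,\dd\tau = \tfrac{\Delta_1^2}{2} f'(t_0) + R_1$ and $\int_{t_{n-1}}^{t_n} f\,\dd\tau = -\tfrac{\Delta_n^2}{2} f'(t_n) + R_n$ with $\|R_1\|_Y \apprle \Delta_1^3 \max_{[t_0,t_1]}\|f^{(2)}\|_Y$ and analogously for $R_n$. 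Inserting these together with the bound on the $E_j$'s into the displayed identity reproduces \eqref{prop5} up to absolute constants.

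The main conceptual step is the first one: spotting that $\mathcal{Q}^{\{\Delta_j\}}$ is ``half'' of a telescoping sum of midpoint rules on the overlapping intervals $[t_{j-1},t_{j+1}]$. Once that identification is made, everything else is standard Taylor-with-remainder bookkeeping; the only technical subtlety is that for $Y$-valued $f$ one must use the integral (Peano-kernel) form of the remainder rather than the scalar mean-value form, but this costs only an absolute constant and so does not affect the final bound. The two residual contributions $\Delta_1^2\|f'(t_0)\|_Y$ and $\Delta_n^2\|f'(t_n)\|_Y$ in the statement are the unavoidable by-products of the telescope stopping at $F(t_1)$ and $F(t_{n-1})$ rather than at $F(t_0)$ and $F(t_n)$, and they cannot be absorbed into the $\max\|f^{(2)}\|$ terms without losing one order.
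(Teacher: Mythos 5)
Your proposal is correct and rests on the same core idea as the paper's proof: recognizing $\mathcal{Q}^{\{\Delta_j\}}(f)$ as half of a sum of midpoint rules on the overlapping intervals $[t_{j-1},t_{j+1}]$, bounding each local midpoint error by $(\Delta_j+\Delta_{j+1})^3\max\|f^{(2)}\|_Y$, and converting the leftover integrals over $[t_0,t_1]$ and $[t_{n-1},t_n]$ into the $\Delta_1^2\|f^{(1)}(t_0)\|_Y$ and $\Delta_n^2\|f^{(1)}(t_n)\|_Y$ terms via Taylor expansion and the hypothesis $f(t_0)=f(t_n)=0$. The only difference is organizational: the paper splits the sum into two interleaved composite midpoint rules on the even- and odd-indexed grids (forcing a case distinction on the parity of $n$ and an explicit trapezoidal step at the ends), whereas your telescoping identity $\sum_{j=1}^{n-1}\bigl[F(t_{j+1})-F(t_{j-1})\bigr]=F(t_n)+F(t_{n-1})-F(t_1)-F(t_0)$ reaches the same decomposition uniformly in $n$, which is a slightly cleaner route to the identical estimate.
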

\begin{proof}
To prove inequality \eqref{prop5}, we utilize the key idea of viewing the new quadrature rule $\mathcal{Q}^{\{\Delta_j\}}$ as a combination of two composite midpoint rules. Specifically, one associated to the grid $\{t_0,t_2,\ldots,t_n\}$ and a second one to $\{t_1,t_3,\ldots,t_{n-1}\}$. We recall that the local midpoint rule is a numerical method for approximating integrals, where the integrand is evaluated at the midpoint of the integration interval. The local quadrature error of this method is given by:
\begin{equation} \label{local_error}
\left\| \int_a^b f(t) \dd t - (b-a) f\left( \frac{a+b}{2} \right) \right\|_Y \le \frac{(b-a)^3}{24} \max_{t \in [a,b]} \| f^{(2)}(t) \|_Y,
\end{equation}
where $f : [a,b] \to Y$ is the integrand function, and $[a,b] \subset \mathbb{R}$ the integration interval.
Let $n$ be even; if $n$ is odd the proof is similar. Using \eqref{local_error} with the grid $\{t_0,t_2,\ldots,t_n\}$, we observe that
\begin{align} \label{PP1}
	\left\| \int_{t_0}^{t_n} f(t) \dd t \right. & \left. - \sum_{k=0}^{\frac{n}{2}-1} \left( t_{2k+2} - t_{2k} \right) f \left( \frac{t_{2k+2} + t_{2k}}{2} \right) \right\|_Y \le \sum_{k=0}^{\frac{n}{2}-1} \frac{\left(t_{2k+2} - t_{2k} \right)^3}{24} \max_{t \in [t_{2k},t_{2k+2}]} \|f^{(2)}(t)\|_Y,
\end{align}
and similarly
\begin{equation} \label{PP2}
	\begin{aligned} 
		& \left\| \int_{t_1}^{t_{n-1}} f(t) \dd t - \sum_{k=1}^{\frac{n}{2}-2} \left(t_{2k+1} - t_{2k-1} \right) f \left( \frac{t_{2k+1} + t_{2k-1}}{2} \right)\right\|_Y
		\\ & \hspace{7.5cm} \le \sum_{k=1}^{\frac{n}{2}-2} \frac{\left(t_{2k+1} - t_{2k-1} \right)^3}{24} \max_{t \in [t_{2k-1},t_{2k+1}]} \|f^{(2)}(t)\|_Y.
	\end{aligned}
\end{equation}
We can conclude using in the remaining intervals $[t_0,t_1]$ and $[t_{n-1},t_n]$ the trapezoidal rule (with the local error similar to \eqref{local_error} but with the constant $\nicefrac{1}{12}$)
\begin{equation} \label{PP3}
	\begin{aligned}
	\left\| 2 \int_{t_0}^{t_n} f(t) \dd t \right\|_Y & \le \left\| \int_{t_0}^{t_n} f(t) \dd t \right\|_Y + \left\| \int_{t_1}^{t_{n-1}} f(t) \dd t \right\|_Y + \left\| \int_{t_0}^{t_1} f(t) \dd t \right\|_Y + \left\| \int_{t_{n-1}}^{t_n} f(t) \dd t \right\|_Y
	\\ & \le \left\|\int_{t_0}^{t_n} f(t) \dd t \right\|_Y + \left\|\int_{t_1}^{t_{n-1}} f(t) \dd t \right\|_Y + \frac{\Delta_1}{2} \left\|f(t_1)\right\|_Y + \frac{\Delta_1^3}{12} \max_{t \in [t_0,t_1]} \| f^{(2)}(t) \|_Y
	\\ & \hspace{5.8cm} + \frac{\Delta_n}{2} \|f(t_{n-1})\|_Y + \frac{\Delta_n^3}{12} \max_{t \in [t_{n-1},t_n]} \| f^{(2)}(t) \|_Y
	\\ & \apprle \left\|\int_{t_0}^{t_n} f(t) \dd t \right\|_Y + \left\|\int_{t_1}^{t_{n-1}} f(t) \dd t \right\|_Y + \Delta_1^2 \left\|f^{(1)}(t_0)\right\|_Y + \Delta_1^3 \max_{t \in [t_0,t_1]} \| f^{(2)}(t) \|_Y
	\\ & \hspace{5.75cm} + \Delta_n^2 \|f^{(1)}(t_{n})\|_Y + \Delta_n^3 \max_{t \in [t_{n-1},t_n]} \| f^{(2)}(t) \|_Y
	\end{aligned}
\end{equation}
since $f(t_1) =  f'(t_0) \Delta_1 + \frac{f''(\xi_1) \Delta_1^2}{2}$ and $f(t_{n-1}) = - f'(t_n) \Delta_n + \frac{f''(\xi_n) \Delta_n^2}{2}$ for $\xi_1 \in [t_0,t_1]$ and $\xi_n \in [t_{n-1},t_n]$.

We conclude combining \eqref{PP1}, \eqref{PP2} and \eqref{PP3}, recalling that for $n$ even
\begin{equation*}
	2\mathcal{Q}^{\{ \Delta_j\}}(f) = \sum_{k=0}^{\frac{n}{2}-1} \left( t_{2k+2} - t_{2k} \right) f \left( \frac{t_{2k+2} + t_{2k}}{2} \right) + \sum_{k=1}^{\frac{n}{2}-2} \left(t_{2k+1} - t_{2k-1} \right) f \left( \frac{t_{2k+1} + t_{2k-1}}{2} \right).
\end{equation*}
\end{proof}

\begin{theorem}
Let $\K \in \mathcal{A}(\mu,\mathcal{B}(X,Y))$, $\mu \in \R$ and $\rho > \max\{-1,\mu+3\}$. Consider a casual function $g \in C^{\rho-1}(\R)$ satisfying $g^{(j)}(0) =0 $, $j = 0,\dots,\rho-1$, and $g^{(\rho)}$ locally integrable. Then, it holds
\begin{align*}
	 & \left\| \K(\partial_t)g(t_n) - \K_\rho\left(\partial_t^{\{\Delta_j\}}\right)g^{(\rho)}(t_n) \right\|_Y 
	 \\ & \hspace{3cm} \apprle \Delta_1^3\max_{t \in [0,t_1]}\|g^{(\rho+2)}(t)\|_X + \sum_{j=1}^{n-1} (\Delta_j + \Delta_{j+1})^3  \max_{t \in [t_{j-1},t_{j+1}]} \|g^{(\rho+2)}(t)\|_X
	 \\ & \hspace{3.3cm} + \Delta_n^3\max_{t \in [t_{n-1},t_n]}\|g^{(\rho+2)}(t)\|_X  + \Delta_{\max}^2 \sum_{j=1}^{n-1} (\Delta_j + \Delta_{j+1}) \| g^{(\rho)}(t_j) \|_X	
	 \\ & \hspace{3.3cm} + \Delta_1^2 \| g^{(\rho+1)}(0) \|_X + \sum_{j=1}^{n-1} (\Delta_j + \Delta_{j+1})|\Delta_{j+1}-\Delta_j| \max_{t \in [t_{j-1},t_{j+1}]} \|g^{(\rho+1)}(t)\|_X.
\end{align*}
In particular, if $| \Delta_{j+1} - \Delta_j | \apprle \Delta_{\max}^2$, then we have 
\begin{align*}
	& \left\| \K(\partial_t)g(t_n) -\K_\rho\left(\partial_t^{\{\Delta_j\}}\right)g^{(\rho)}(t_n) \right\|_Y
	\\ & \hspace{0.2cm} \apprle \Delta_{\max}^2 \Biggl[\sum_{j=1}^{n-1} (\Delta_j + \Delta_{j+1}) \left( \max_{t \in [t_{j-1},t_{j+1}]} \|g^{(\rho+2)}(t)\|_X + \max_{t \in [t_{j-1},t_{j+1}]} \|g^{(\rho+1)}(t)\|_X + \| g^{(\rho)} (t_j)\|_X \right) \Biggr.
	\\ & \hspace{1.7cm} \Biggl. + \Delta_1 \max_{t \in [0,t_1]}\|g^{(\rho+2)}(t)\|_X + \Delta_n \max_{t \in [t_{n-1},t_n]}\|g^{(\rho+2)}(t)\|_X + \| g^{(\rho+1)}(0) \|_X \Biggr].
\end{align*}
\end{theorem}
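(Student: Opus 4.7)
The plan is to follow the strategy announced at the start of Section~\ref{sec4}: rewrite the convolution as an integral $\int_0^{t_n} F(\tau)\dd\tau$ with $F(\tau) := \kappa_\rho(t_n - \tau) g^{(\rho)}(\tau)$, recognise the gCQ sum as a small perturbation of the composite quadrature $\mathcal{Q}^{\{\Delta_j\}}(F)$ studied in Lemma~\ref{lemma2}, and bound the resulting error by three additive pieces. A crucial preliminary observation is that since $\K_\rho \in \mathcal{A}(\mu-\rho, \B(X,Y))$ with $\mu - \rho < -3$, the initial-value theorem applied to $s^k \K_\rho(s)$ for $k \le 2$ forces $\kappa_\rho \in C^2$ up to $0$ with $\kappa_\rho(0) = \kappa_\rho'(0) = 0$; this is what will be responsible both for $F(t_n)=0$ and $F'(t_n)=0$, matching the asymmetry of the claimed bound.

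I would split the error by inserting two intermediate quantities:
\begin{align*}
\K(\partial_t)g(t_n) - \K_\rho\!\bigl(\partial_t^{\{\Delta_j\}}\bigr)\!g^{(\rho)}(t_n)
&= \Bigl[\int_0^{t_n}\! F(\tau)\dd\tau - \mathcal{Q}^{\{\Delta_j\}}(F)\Bigr] \\
&\quad + \sum_{j=1}^{n-1}\tfrac{t_{j+1}-t_{j-1}}{2}\,\kappa_\rho\!\Bigl(t_n-\tfrac{t_{j-1}+t_{j+1}}{2}\Bigr)\Bigl[g^{(\rho)}\!\Bigl(\tfrac{t_{j-1}+t_{j+1}}{2}\Bigr) - g^{(\rho)}(t_j)\Bigr] \\
&\quad + \sum_{j=1}^{n-1}\Bigl[\tfrac{t_{j+1}-t_{j-1}}{2}\kappa_\rho\!\Bigl(t_n-\tfrac{t_{j-1}+t_{j+1}}{2}\Bigr) - w_{n,j}(\K_\rho)\Bigr]\! g^{(\rho)}(t_j) \\
&\quad - w_{n,n}(\K_\rho)\,g^{(\rho)}(t_n).
\end{align*}
The third sum is a \emph{weight error}, controlled term-by-term by Proposition~\ref{prop4}, and yields $\apprle \Delta_{\max}^2 \sum_{j=1}^{n-1}(\Delta_j+\Delta_{j+1})\|g^{(\rho)}(t_j)\|_X$. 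The second sum is a \emph{node-mismatch error}: writing $\tfrac{t_{j-1}+t_{j+1}}{2} - t_j = \tfrac{\Delta_{j+1}-\Delta_j}{2}$ and Taylor-expanding $g^{(\rho)}$ about $t_j$, combined with uniform boundedness of $\kappa_\rho$ on $[0,T]$, produces $\apprle \sum_{j=1}^{n-1}(\Delta_j+\Delta_{j+1})|\Delta_{j+1}-\Delta_j|\max_{[t_{j-1},t_{j+1}]}\|g^{(\rho+1)}\|_X$. Finally, the residue theorem yields $w_{n,n}(\K_\rho) = \K_\rho(2\Delta_n^{-1})$, whose norm is $\apprle \Delta_n^{\rho-\mu}$ and is $o(\Delta_{\max}^2)$ since $\rho-\mu > 3$, so this stray term is absorbed.

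For the first bracket, Lemma~\ref{lemma2} with $f = F$ does the job once its boundary hypotheses are verified: $F(t_n) = \kappa_\rho(0) g^{(\rho)}(t_n) = 0$ by the preliminary observation, and $F(0) = \kappa_\rho(t_n) g^{(\rho)}(0) = 0$ under the causal vanishing of $g$ at the origin that is implicit in the smoothness demanded by the right-hand side of the bound. Expanding $F''$ by the Leibniz rule gives a sum of products $\kappa_\rho^{(k)}(t_n-\tau) g^{(\rho+\ell)}(\tau)$ with $k+\ell = 2$; since $\kappa_\rho$, $\kappa_\rho'$, $\kappa_\rho''$ are uniformly bounded on $[0,T]$, each $\max\|F''\|$ contribution of Lemma~\ref{lemma2} is dominated by $\max\|g^{(\rho+2)}\|$ on the corresponding subinterval, up to a constant depending only on $T$ and $\mu$. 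The left-endpoint correction $\Delta_1^2\|F'(0)\|$ reduces to $\Delta_1^2\|g^{(\rho+1)}(0)\|_X$ via $F'(0) = -\kappa_\rho'(t_n) g^{(\rho)}(0) + \kappa_\rho(t_n) g^{(\rho+1)}(0)$ and the hypothesis at $0$, while its right-endpoint counterpart vanishes because $F'(t_n) = -\kappa_\rho'(0) g^{(\rho)}(t_n) + \kappa_\rho(0) g^{(\rho+1)}(t_n) = 0$. Summing the three contributions yields the first displayed bound, and the second follows by inserting $|\Delta_{j+1}-\Delta_j| \apprle \Delta_{\max}^2$ into the node-mismatch estimate so that it matches the $\Delta_{\max}^2$ scaling of the other pieces.

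The main difficulty will not be any single estimate but the careful bookkeeping needed to match the asymmetric boundary structure of the claimed bound: one must simultaneously use \emph{both} $\kappa_\rho(0) = 0$ and $\kappa_\rho'(0) = 0$ to kill every $t_n$-endpoint contribution from $F$ and $F'$, and use the causal data at $\tau=0$ to route $F'(0)$ through $g^{(\rho+1)}(0)$ rather than $g^{(\rho)}(0)$. A naive decomposition would leave spurious boundary terms of order lower than $\Delta_{\max}^2$ that would destroy the announced convergence rate.
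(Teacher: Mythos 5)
Your proposal is correct and follows essentially the same route as the paper: the same intermediate quantity $\sum_{j=1}^{n-1}\frac{t_{j+1}-t_{j-1}}{2}\kappa_\rho\bigl(t_n-\frac{t_{j-1}+t_{j+1}}{2}\bigr)g^{(\rho)}(t_j)$, with the quadrature error handled by Lemma~\ref{lemma2} (using $\kappa_\rho(0)=\kappa_\rho'(0)=0$), the node mismatch by Taylor expansion of $g^{(\rho)}$, and the weight error by Proposition~\ref{prop4}. Your explicit treatment of the leftover term $w_{n,n}(\K_\rho)g^{(\rho)}(t_n)=\K_\rho(2\Delta_n^{-1})g^{(\rho)}(t_n)$ is in fact slightly more careful than the paper, which silently drops it from the $E_2$ estimate.
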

\begin{proof}
By definition \eqref{convolution}, we can write
\begin{equation} \label{convEQ}
	\K(\partial_t) g(t_n) = \int_0^{t_n} \kappa_\rho(t_n - \tau) g^{(\rho)}(\tau) \dd \tau.
\end{equation}
Let us define $\widetilde{I}_n$ to be a first approximation of this integral
\begin{equation*}
	\widetilde{I}_n := \sum_{j=1}^{n-1} \left( \frac{t_{j+1} - t_{j-1}}{2} \right) \kappa_\rho \left(t_n - \frac{t_{j+1} + t_{j-1}}{2} \right) g^{(\rho)}(t_j).
\end{equation*}
We express the error in two terms $\K(\partial_t)g(t_n)- \K_\rho\left(\partial_t^{\{\Delta_j\}}\right)g^{(\rho)}(t_n) = E_1 + E_2$ where
\begin{equation*}
	E_1 := \K(\partial_t) g(t_n) - \widetilde{I}_n, \qquad	E_2 :=  \widetilde{I}_n - \K_\rho\left(\partial_t^{\{\Delta_j\}}\right)g^{(\rho)}(t_n).
\end{equation*}
To bound $E_1$ we notice that 
\begin{equation} \label{SS1}
\begin{aligned}
	\left\| E_1\right\|_Y & =\left\| \K(\partial_t) g(t_n) - \sum_{j=1}^{n-1} \left( \frac{t_{j+1} - t_{j-1}}{2} \right) \kappa_\rho \left(t_n - \frac{t_{j+1} + t_{j-1}}{2} \right) g^{(\rho)}(t_j) \right\|_Y
	 \\ & \apprle \left\|\K(\partial_t) g(t_n) - \sum_{j=1}^{n-1} \left( \frac{t_{j+1} - t_{j-1}}{2} \right) \kappa_\rho \left(t_n - \frac{t_{j+1} + t_{j-1}}{2} \right) g^{(\rho)}\left(\frac{t_{j+1}+t_{j-1}}{2}\right)\right\|_Y 
	 \\ & \hspace{0.5cm}  + \sum_{j=1}^{n-1} \left( \frac{t_{j+1} - t_{j-1}}{2} \right) \left\| \kappa_\rho \left(t_n - \frac{t_{j+1} + t_{j-1}}{2} \right) \right\|_{\B(X,Y)} \left\| g^{(\rho)}(t_j) - g^{(\rho)}\left(\frac{t_{j+1}+t_{j-1}}{2}\right)\right\|_X 
	 \\ & =: I_1 + I_2.
\end{aligned}
\end{equation}
Recalling \eqref{convEQ} and applying Lemma \ref{lemma2}, we can estimate
\begin{equation} \label{SS2}
\begin{aligned}
	I_1 & = \left\| \int_{0}^{t_n} \kappa_\rho(t_n - \tau) g^{(\rho)}(\tau) \dd \tau - \mathcal{Q}^{\{\Delta_j\}}\left(\kappa_\rho(t_n - \cdot) g^{(\rho)}\right) \right\|_Y
	\\ & \apprle \Delta_1^3\max_{t \in [0,t_1]}\|g^{\rho+2}(t)\|_X + \sum_{j=1}^{n-1} (\Delta_j + \Delta_{j+1})^3 \max_{t \in [t_{j-1},t_{j+1}]} \|g^{(\rho+2)}(t)\|_X  + \Delta_n^3\max_{t \in [t_{n-1},t_n]}\|g^{(\rho+2)}(t)\|_X
	\\ & \hspace{0.5cm} + \Delta_1^2 \| g^{(\rho+1)}(0) \|_X,
\end{aligned}
\end{equation}
where we also used the boundedness of $\| \kappa_\rho(\cdot) \|_{\B(X,Y)}$ and $\kappa_\rho(0) = \partial_t \kappa_\rho(0) = 0$.

On the other hand, we observe that 
\begin{equation*}
	 \left\| g^{(\rho)}(t_j) - g^{(\rho)}\left(\frac{t_{j+1}+t_{j-1}}{2}\right)\right\|_X \apprle \left| \frac{t_{j+1} + t_{j-1} - 2t_j}{2} \right| \max_{t \in [t_{j-1},t_{j+1}]} \| g^{(\rho+1)}(t) \|_X,
\end{equation*}
from which we conclude
\begin{equation} \label{SS3}
\begin{aligned}
	I_2 & \apprle \sum_{j=1}^{n-1} \left( \frac{t_{j+1}-t_{j-1}}{2}\right) \left| \frac{t_{j+1} + t_{j-1} - 2t_j}{2} \right| \max_{t \in [t_{j-1},t_{j+1}]} \| g^{(\rho+1)}(t) \|_X
	\\ & \apprle \sum_{j=1}^{n-1} (\Delta_j + \Delta_{j+1}) | \Delta_{j+1} - \Delta_j | \max_{t \in [t_{j-1},t_{j+1}]} \| g^{(\rho+1)}(t) \|_X.
\end{aligned}
\end{equation}
To bound $E_2$ we simply use Proposition \ref{prop4}
\begin{equation} \label{SS4}
\begin{aligned}
	\left| E_2 \right| & \le \sum_{j=1}^{n-1} \left\| w_{n,j}(\K_\rho) - \frac{t_{j+1} - t_{j-1}}{2} \kappa_\rho\left( t_n - \frac{t_{j-1} + t_{j+1}}{2}\right) \right\|_X \| g^{(\rho)}(t_j) \|_{\B(X,Y)}
	\\ & \apprle  \Delta_{\max}^2 \sum_{j=1}^{n-1} (\Delta_j + \Delta_{j+1}) \| g^{(\rho)}(t_j)\|_X. 
\end{aligned}
\end{equation}
Combining estimates \eqref{SS1}, \eqref{SS2}, \eqref{SS3} and \eqref{SS4} we conclude.
\end{proof}

\begin{remark}
In practice, a common choice of variable grids is a graded mesh of the form $\{t_j = \left( \nicefrac{j}{N} \right)^{\alpha} j = 0,\ldots N\}$, for some $\alpha \ge 1$ and $N \in \mathbb{N}$. For this particular time stepping schemes, we can verify that
\begin{equation} \label{condmax}
	| \Delta_{j+1} - \Delta_j| \le \Delta_{\max}^2
\end{equation}
for all $j = 1,\ldots,N-1$. Indeed, we reach the maximum for $j=N-1$, thus obtaining
\begin{equation} \label{qqq1}
\begin{aligned}
	\max_{j \in \{1,\ldots,N-1\}}	| \Delta_{j+1} - \Delta_j| & = \left(1-\frac{(N-1)^\alpha}{N^\alpha}\right) - \left(\frac{(N-1)^\alpha}{N^\alpha}-\frac{(N-2)^\alpha}{N^\alpha}\right) 
	\\ & = 1 - 2\frac{(N-1)^\alpha}{N^\alpha} + \frac{(N-2)^\alpha}{N^\alpha}
\end{aligned}
\end{equation}
and similarly, since $\Delta_{\max} = \Delta_N$,
\begin{equation} \label{qqq2}
	\Delta_{\max}^2 =  \left(1 -\frac{(N-1)^\alpha}{N^\alpha}\right)^2 = 1 - 2\frac{(N-1)^\alpha}{N^\alpha} + \frac{(N-1)^{2\alpha}}{N^{2\alpha}}.
\end{equation}
Combining \eqref{qqq1} and \eqref{qqq2}, we conclude that \eqref{condmax} is equivalent to
\begin{equation*}
	  \frac{(N-2)^\alpha}{N^\alpha} \le \frac{(N-1)^{2\alpha}}{N^{2\alpha}},
\end{equation*}
but this is clearly true for all $N \in \mathbb{N}$ and for all $\alpha \ge 1$.
\end{remark}
\section{Numerical results and algorithms} \label{sec5}
This section outlines the numerical algorithms used to compute the forward and backward gCQ based on the trapezoidal rule. Additionally, we introduce the gCQ based on BDF2 with non-uniform time steps, along with the corresponding algorithms. We also provide a reminder of the quadrature rules proposed in \cite{LopezFernandezSauter2015b} and explain how we have adapted them to our specific context. To illustrate the effectiveness of our proposed methods, we include a one-dimensional numerical example.
\subsection{gCQ based on BDF2 with non-uniform steps}
Here, we present also a gCQ method based on BDF2 on variable grids. Proceeding as in \eqref{convo} we need to discretize the initial value problem \eqref{ode} via a non uniform BDF2 scheme (see e.g. \cite[Section 5]{Grigorieff1983}). 

Given $0 = t_0 < t_1 < \ldots < t_N = T$ with time-steps $\Delta_n = t_n - t_{n-1}, n = 1, \ldots, N$, and setting $\Delta_0 = \Delta_1$, the approximation at $t_n$ of the solution of \eqref{ode}, for $n = 1,\ldots,N$ is
\begin{align*}
	u_n(s) =  u_{n-1}(s) \frac{(\Delta_{n-1} + \Delta_n)^2}{\Delta_{n-1}(\Delta_{n-1}+2\Delta_n)} - u_{n-2}(s) \frac{\Delta_{n}^2}{\Delta_{n-1}(\Delta_{n-1}+2\Delta_{n})} + \bigl( su_n(s) + g^{(\rho)}(t_n) \bigr) \frac{\Delta_{n}(\Delta_{n-1}+\Delta_{n})}{\Delta_{n-1}+2\Delta_{n}}
\end{align*}
with $u_0(s) \equiv u_{-1}(s) \equiv 0$, from which we simplify
\begin{align} \label{solBDF2}
	u_n(s) = u_{n-1}(s) \frac{B_n}{1-A_n s} - u_{n-2}(s) \frac{C_n}{1-A_ns} + g^{(\rho)}(t_n) \frac{A_n}{1-A_n s} 
\end{align}
with the defined coefficients
\begin{equation} \label{coefficients}
	A_n := \frac{\Delta_{n}(\Delta_{n-1}+\Delta_{n})}{\Delta_{n-1}+2\Delta_{n}}, \quad B_n := \frac{(\Delta_{n-1} + \Delta_n)^2}{\Delta_{n-1}(\Delta_{n-1}+2\Delta_n)}, \quad C_n := \frac{\Delta_{n}^2}{\Delta_{n-1}(\Delta_{n-1}+2\Delta_{n})}.
\end{equation}
Deriving a closed-form solution similar to equation \eqref{recursion} for this recursive approach is a challenging task. Hence, we have decided to postpone the theoretical analysis of BDF2 gCQ for future research. In this paper, we focus on providing a concise description of Algorithms \ref{alg2} and \ref{alg4} on page \pageref{alg2} that facilitate the computation of forward and backward gCQ utilizing BDF2. Additionally, we conduct a numerical experiment in the next subsection to highlight the effectiveness also of the BDF2. In addition to the algorithms just presented, we have also synthesized similar algorithms for gCQ based on the trapezoidal rule, both for forward and backward gCQ (see Algorithms \ref{alg1} and \ref{alg3}). It is worth noting that the forward scheme is used to compute a convolution like \eqref{equation1} when $g$ is known, while the backward scheme is used when $\phi$ is known. Similar algorithms for BDF1 gCQ can be found in \cite[Section 4]{LopezFernandezSauter2015b}, while for Runge-Kutta gCQ in \cite[Section 6]{LopezFernandezSauter2016} and \cite[Section 3]{LeitnerSchanz2021}.

\subsection{Quadrature aspects}

The idea is to compute step by step 
\begin{equation} \label{EQ22}
	\K\left(\partial_t^{\{\Delta_j\}}\right) g(t_n) = \frac{1}{2\pi \mi} \oint_{\mathcal{C}} \K_\rho(s) u_n(s) \dd s
\end{equation}
where $u_n$ is defined in \eqref{solTrap} and the complex integral is performed via suitable quadrature rules.

In order to utilize BDF1 with gCQ effectively, it is necessary to solve a quadrature problem. This issue has been successfully addressed in \cite{LopezFernandezSauter2015b}, with experimental results provided in \cite{LopezFernandezSauter2015a}. The circle contour is the optimal choice in this case, and it is parameterized using Jacobi elliptic functions to fully exploit the analyticity domain of the integrand in \eqref{EQ22}. It is worth noting that the poles of the integrand are located in the real segment.

We will briefly review the construction and the simple modification made in our case. As per the theoretical analysis presented in \cite{LopezFernandezSauter2015a,LopezFernandezSauter2015b}, we adopted $N_Q = N \log_2^2(N)$ quadrature points, where $N$ represents the number of time steps. The details and results of the aforementioned papers are summarized below. The integration points in the complex plane are
\begin{equation*}
	s_\ell := \gamma(\sigma_\ell), \quad w_\ell := \frac{4K(k^2)}{2 \pi \mi N_Q} \gamma'(\sigma_\ell), \quad \ell = 1,\ldots, N_Q
\end{equation*}
where the parameters $k$ and $\sigma_\ell$, depending on
\begin{equation} \label{qNOde}
	q:=\frac{M}{\Delta_{\min}}, \qquad M := R\max\left\{\Delta_{\max}^{-2},\Delta_{\min}^{-1}\right\}, \quad  \Delta_{\min} := \min\{\Delta_j\},
\end{equation}
are defined as
\begin{equation*}
	k := \frac{q- \sqrt{2q-1}}{q+\sqrt{2q-1}}, \quad \sigma_\ell := -K(k^2)+\left( \ell-\frac{1}{2} \right) \frac{4K(k^2)}{N_Q}
\end{equation*}
for $\ell = 1, \ldots, N_Q$.
The parameter $R$ is a constant depending on the underlined ODE solvers, precisely
\begin{equation*}
	R := 
	\begin{cases}
		1 & \text{BDF1} \\
		1.5 & \text{BDF2} \\
		2 & \text{trapezoidal rule}.
	\end{cases}
\end{equation*}
Finally, $K(k)$ is the complete elliptic integral of first kind
\begin{equation*}
	K(k) := \int_0^1 \frac{1}{\sqrt{(1-x^2)(1-k^2x^2)}} \dd x, \quad K'(k) = K(1-k),
\end{equation*} 
and the functions $\gamma$ is the parametrization of a circle centered in $M$ of radius $M$ (see \cite[Lemma 15]{LopezFernandezSauter2015b})
\begin{equation*}
\gamma(\sigma) := \frac{M}{q-1} \left( \sqrt{2q-1} \frac{k^{-1} + \text{sn}(\sigma | k^2)}{k^{-1} - \text{sn}(\sigma | k^2)} -1 \right), \quad
\gamma'(\sigma) = \frac{M\sqrt{2q-1}}{q-1} \frac{2 \ \text{cn}(\sigma | k^2) \text{dn}(\sigma | k^2)}{k(k^{-1}- \text{sn}(\sigma | k^2))}
\end{equation*}
where $\text{sn}, \text{dn}$ and $\text{cn}$ are the Jacobi elliptic functions whose evaluation have been performed in MATLAB by means of Driscoll's Schwarz-Christoffel Toolbox \cite{Driscoll2005}.

\begin{remark}
The only deviation from the nodes and weights proposed in \cite{LopezFernandezSauter2015b} is the introduction of a scaling factor, $R$, in \eqref{qNOde}. This parameter ensures that the complex poles of the integrands are suitably distanced from the contour of the circle with radius $M$ and center $M$ used for integration. For BDF1, the poles are $\{\Delta_j^{-1}\}$, and \cite{LopezFernandezSauter2015b} demonstrated that $R = 1$ is sufficient. In the case of BDF2, the poles are $\{A_j^{-1}\}$ as defined in \eqref{coefficients}. To extend the ideas put forth in \cite{LopezFernandezSauter2015b}, we set $R = 1.5$ for this case. In fact, as $\Delta_{\max}$ approaches $0$, we have $A_j^{-1} \to \frac{3}{2} \Delta_j^{-1} $. Finally, for the trapezoidal rule, the poles are $\{2\Delta_j^{-1}\}$, and we have selected $R=2$.
\end{remark}

\subsection{Numerical results}
We consider the following one-dimensional example, already performed in \cite{LopezFernandezSauter2015b, LopezFernandezSauter2016}: find $g$ such $\K(\partial_t)g = \phi$ with 
\begin{equation} \label{Kphi}
\K(s) := \frac{1-e^{-2s}}{2s}, \quad \text{and} \quad \phi(t) := t^{\nicefrac{5}{2}} e^{-t}.
\end{equation}
The exact solution to this problem is given by
\begin{equation*}
	g(t) := 2\sum_{k=0}^{\lfloor \frac{t}{2} \rfloor} \phi'(t-2k)
\end{equation*}
We approximate $g(t)$ for $t \in [0, 1]$ by applying Algorithms \ref{alg3} and \ref{alg4}. Note that $\K^{-1}$ satisfies \eqref{assump} with $\mu=1$. The right hand side $\phi$ satisfies $\phi^{(j)}(0) = 0$ for $j=0, 1, 2$ but is not three times differentiable at $t=0$. This lack of regularity suggests to use a time grid which is algebraically graded towards the origin. We choose a graded mesh with points
\begin{equation} \label{grids}
	t_j = \left( \frac{j}{N} \right)^{\alpha}, \quad j = 0,\ldots N
\end{equation}
Figure \ref{fig3} shows that the convergence rate is $\mathcal{O}(\Delta^2)$ for the quadratic graded mesh ($\alpha=2$) and about $\mathcal{O}(\Delta^{1.5})$ for the uniform mesh ($\alpha=1$). For this example, we have $\mu = 1$, which implies that the minimal integer $\rho > \mu + 3 = 5$. Note that, however, we have used $\rho=0$ in our computations. It remains an open problem whether there exist examples where a larger value of $\rho$ is necessary for variable steps than for uniform steps, or whether our theory yields a suboptimal estimate in terms of this parameter.

\begin{figure}[h!]
  \centering
  \begin{minipage}[b]{0.275\textwidth}
    \includegraphics[width=\textwidth]{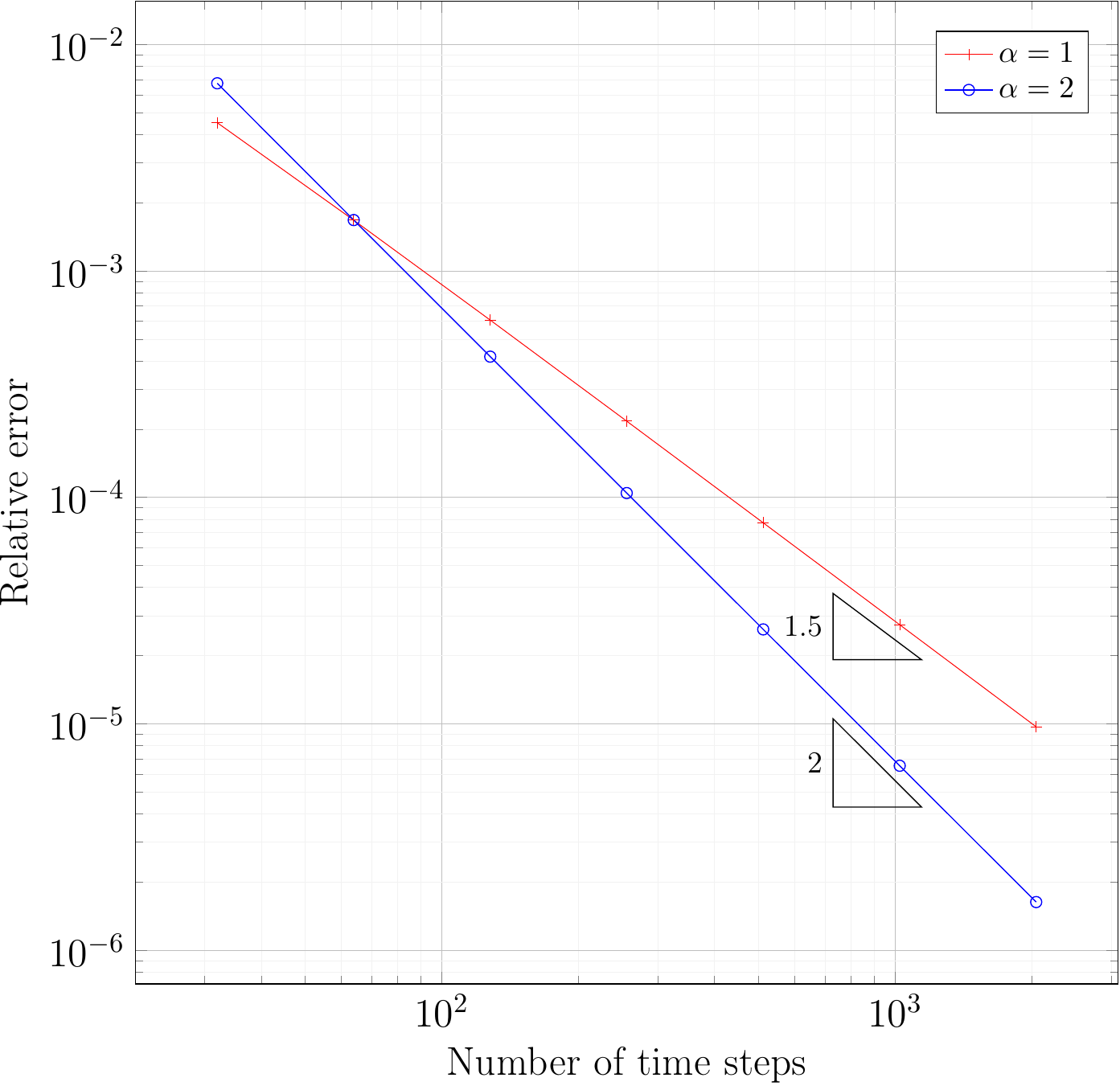}
  \end{minipage}
  \hspace{1cm}
  \begin{minipage}[b]{0.28\textwidth}
    \includegraphics[width=\textwidth]{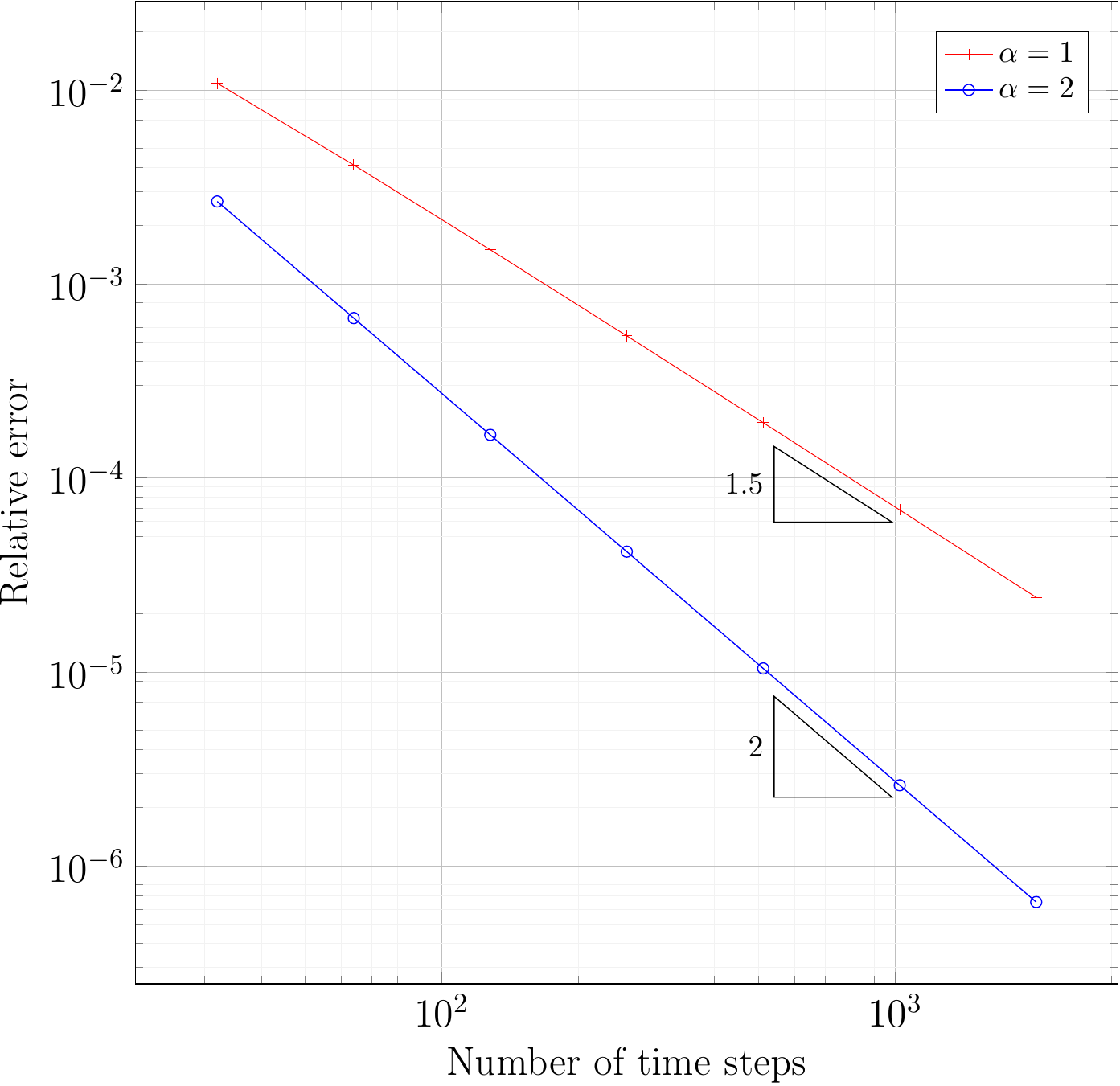}
  \end{minipage}
\caption{Error with respect to the number of steps for the data in \eqref{Kphi} for different grids \eqref{grids}, obtained with gCQ based on the trapezoidal rule (left) and on BDF2 (right).} \label{fig3}
\end{figure}

\begin{figure}[h!]
  \centering
  \begin{minipage}[b]{0.25\textwidth}
    \includegraphics[width=\textwidth]{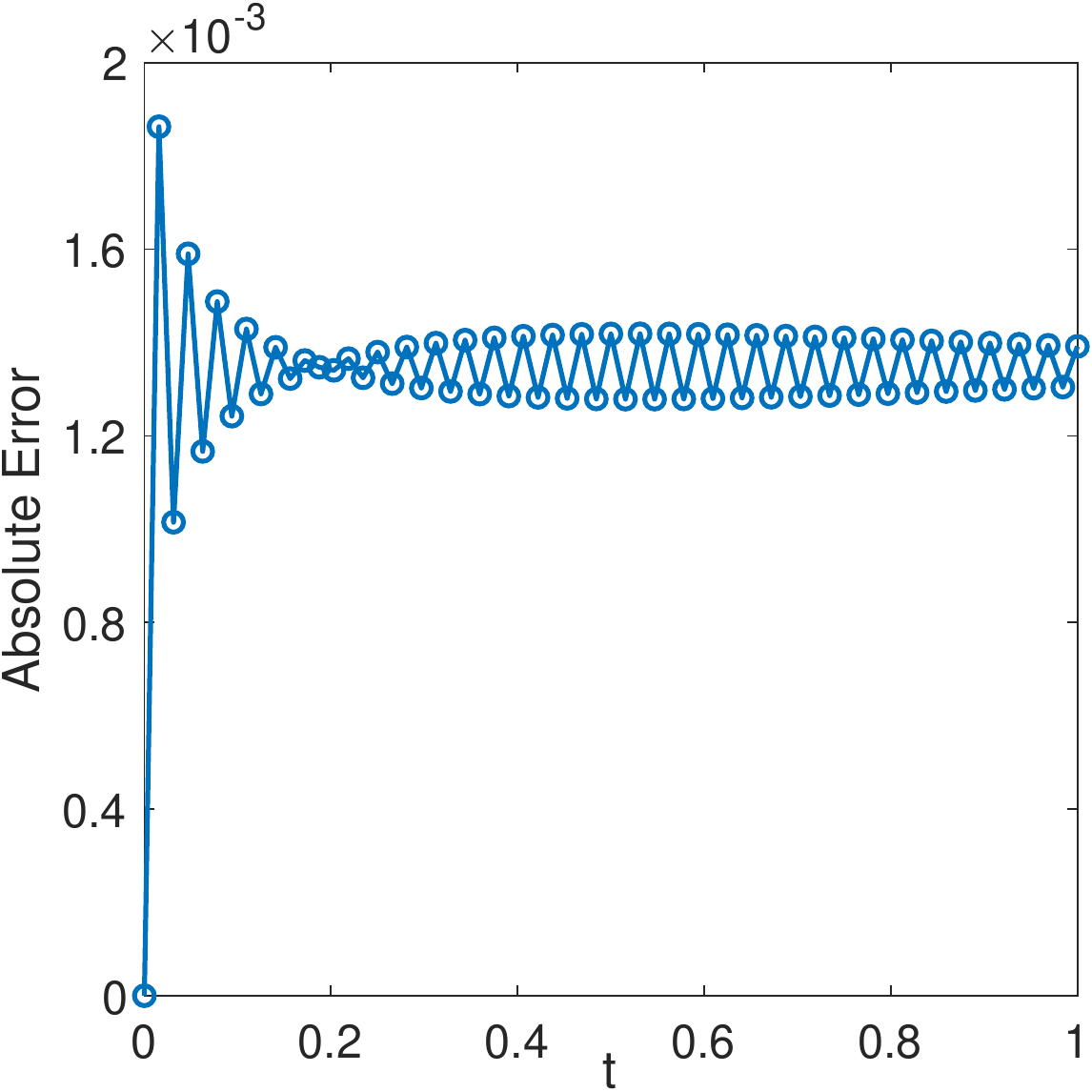}
  \end{minipage}
\hspace{2cm}
  \begin{minipage}[b]{0.25\textwidth}
    \includegraphics[width=\textwidth]{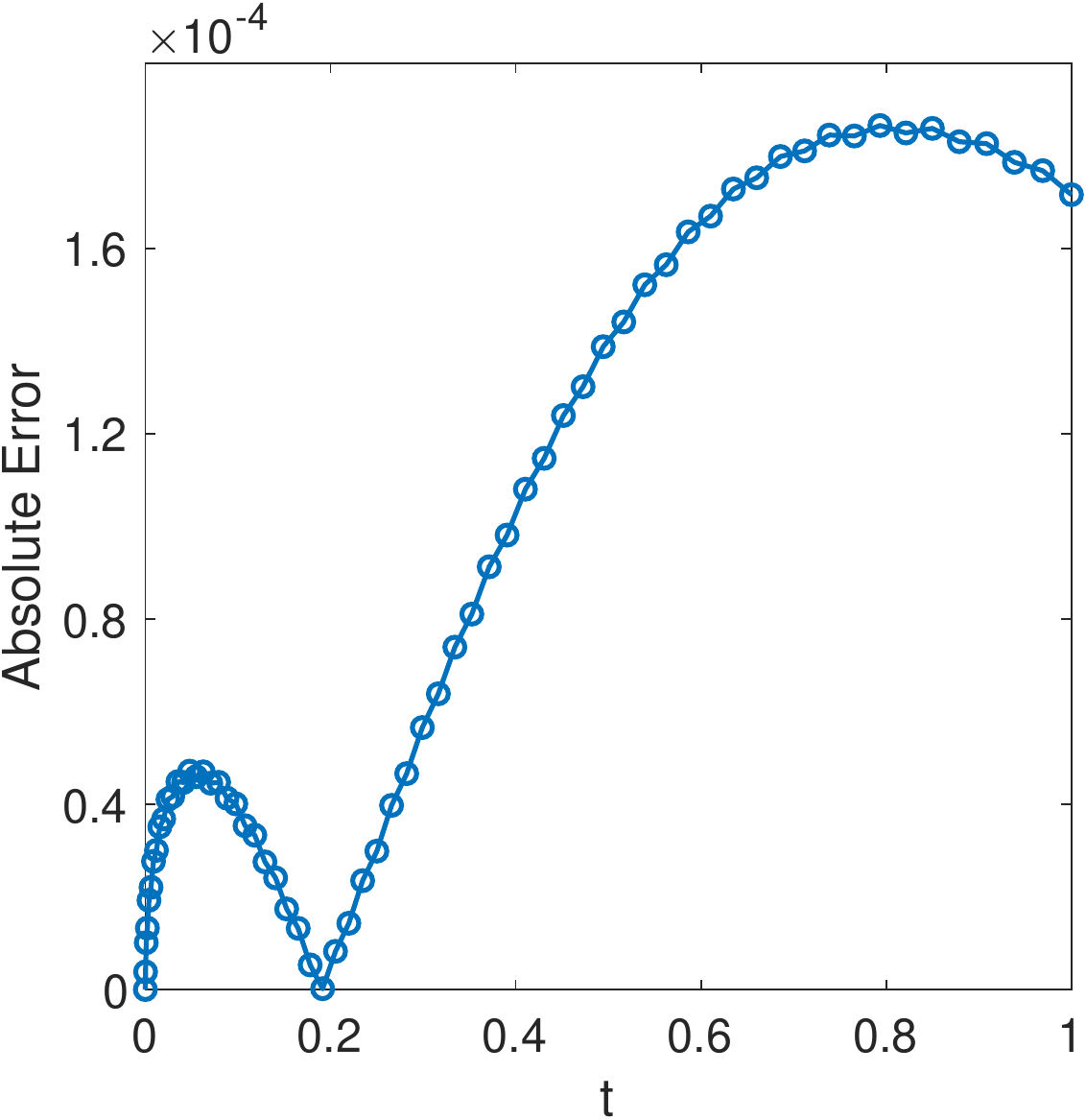}
  \end{minipage}
  \caption{Absolute error in the approximation with the trapezoidal rule for the data in \eqref{Kphi} with $N=64$ time steps: with uniform time steps, i.e. $\alpha=1$, in the left, and with quadratically graded time steps, i.e. $\alpha=2$, in the right.} \label{fig1}
\end{figure}

\begin{figure}[h!]
  \centering
  \begin{minipage}[b]{0.25\textwidth}
    \includegraphics[width=\textwidth]{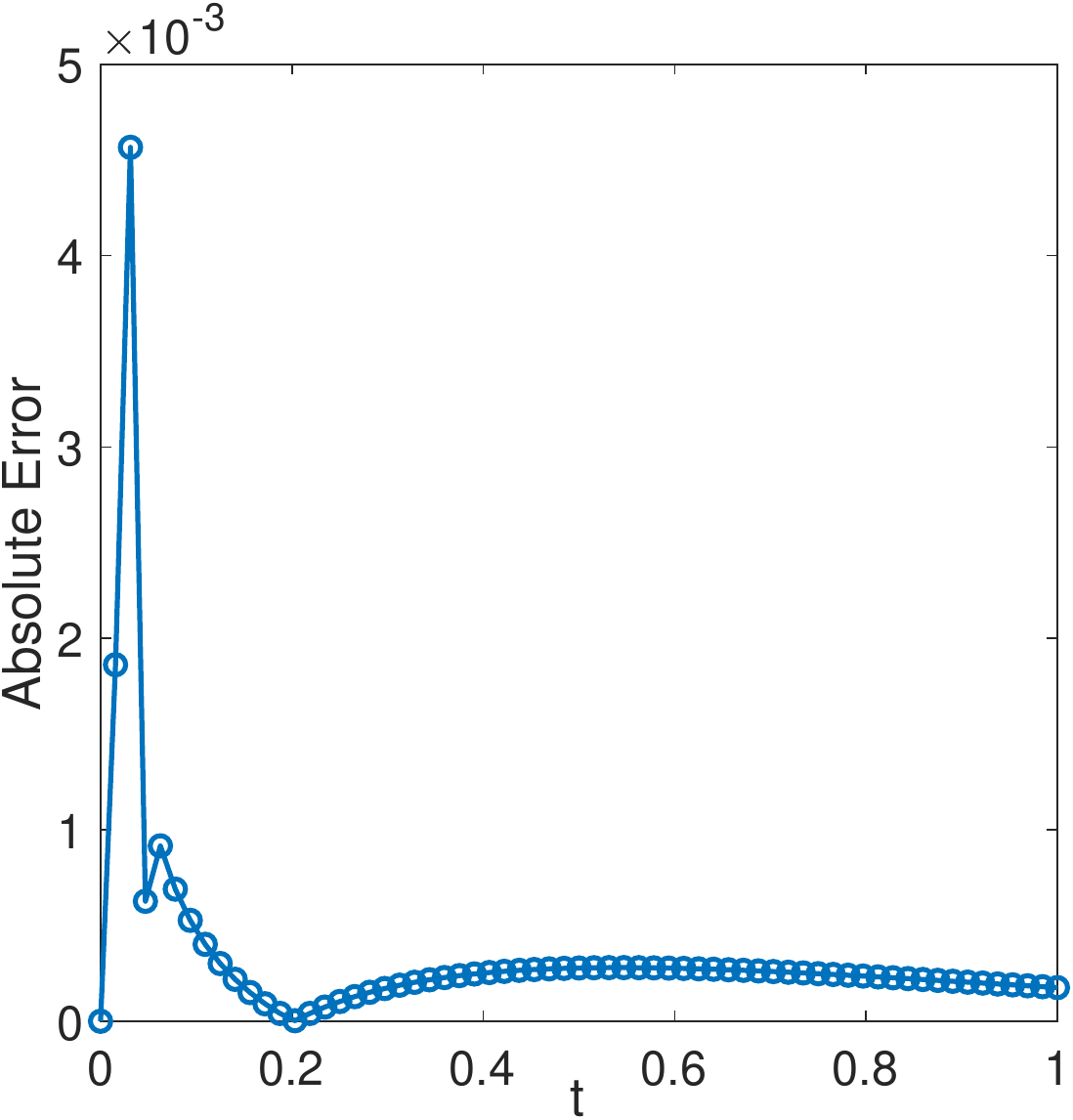}
  \end{minipage}
\hspace{2cm}
  \begin{minipage}[b]{0.25\textwidth}
    \includegraphics[width=\textwidth]{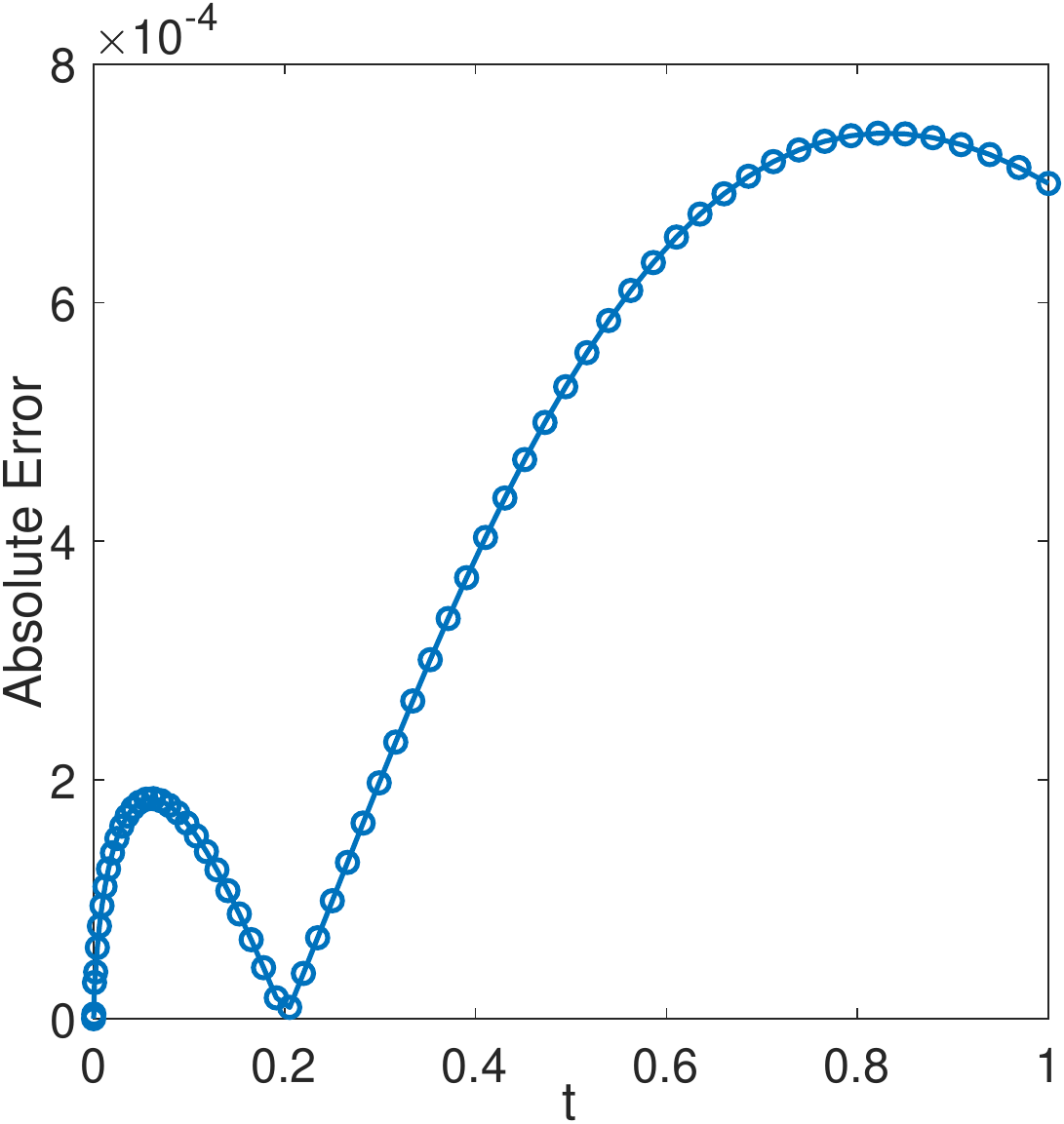}
  \end{minipage}
  \caption{Absolute error in the approximation with the BDF2 for the data in \eqref{Kphi} with $N=64$ time steps: with uniform time steps, i.e. $\alpha=1$, in the left, and with quadratically graded time steps, i.e. $\alpha=2$, in the right.} \label{fig2}
\end{figure}

\begin{algorithm}
\caption{\textit{Forward} gCQ with contour quadrature based on trapezoidal rule}\label{gCQtrap}
\label{alg1}
\begin{algorithmic}
\State\textbf{Initialization} Generate $\K_\rho(s_{\ell})$ for all contour quadrature nodes $s_\ell$, $\ell = 1,\ldots,N_Q$. Compute $\phi_1$ from 
\begin{equation*}
	\phi_1  = \K_\rho\left(\frac{2}{\Delta_1} \right) g^{(\rho)}(t_1).
\end{equation*}
\State Set $u_0(s) \equiv 0$.
\For {$n = 2,\ldots,N$}
\vspace{0.1cm}
\State \textbf{1. Trapezoidal step:} apply a step of the trapezoidal rule and compute
\begin{equation*}
u_{n-1}(s_\ell) = u_{n-2}(s_\ell) \frac{2+\Delta_{n-1}s_\ell}{2-\Delta_{n-1}s_\ell} +  \left(g^{(\rho)}(t_{n-2}) + g^{(\rho)}(t_{n-1})\right) \frac{\Delta_{n-1}}{2-\Delta_{n-1} s_\ell},
\end{equation*}
\State for all contour quadrature nodes $\ell=1,\ldots,N_Q$.
\State \textbf{2. Compute $\phi_n$:} if $\Delta_n$ is a new time step, then, generate $\K_\rho\left(\frac{2}{\Delta_n}\right)$; otherwise this operator was already
\State generated in a previous step. Compute $\phi_n$ from
\begin{equation*}
	 \phi_n = \sum_{\ell=1}^{N_Q} w_\ell \K_{\rho}(s_\ell) u_{n-1}(s_\ell) \frac{2+\Delta_n s_\ell}{2- \Delta_n s_\ell} +  \K_\rho\left(\frac{2}{\Delta_n} \right) \left(g^{(\rho)}(t_{n-1}) + g^{(\rho)}(t_{n})\right).
\end{equation*}
\EndFor
\end{algorithmic}
\end{algorithm}

\begin{algorithm} 
\caption{\textit{Forward} gCQ with contour quadrature based on BDF2}
\label{alg2}
\begin{algorithmic}
\State\textbf{Initialization} Generate $\K_\rho(s_{\ell})$ for all contour quadrature nodes $s_\ell$, $\ell = 1,\ldots,N_Q$. Compute $\phi_1$ from 
\begin{equation*}
	\phi_1 = \K_\rho\left(\frac{1}{A_1} \right) g^{(\rho)}(t_1).
\end{equation*}
Set $u_0(s) \equiv u_{-1}(s) \equiv 0$.
\For{$n=2,\ldots,N$}
\State \textbf{1. BDF2 step:} apply a step of the BDF2 and compute
\begin{align*}
u_{n-1}(s_\ell) =  u_{n-2}(s_\ell) \frac{B_{n-1}}{1-A_{n-1} s_\ell} - u_{n-3}(s_\ell)\frac{C_{n-1}}{1-A_{n-1}s_\ell}  + g^{(\rho)}(t_{n-1})\frac{A_{n-1}}{1-A_{n-1} s_\ell}
\end{align*}
\State with coefficients \eqref{coefficients}, for all contour quadrature nodes $\ell=1,\ldots,N_Q$.
\State \textbf{2. Compute $\phi_n$:} if $A_n$ is different from the previous coefficients, then generate $\K_\rho\left(\frac{1}{A_n}\right)$; otherwise
\State this operator was already generated in a previous step. Compute $\phi_n$ from
\begin{equation*}
	\phi_n = \sum_{\ell=1}^{N_Q} w_\ell \K_{\rho}(s_\ell) \left( u_{n-1}(s_\ell) \frac{B_n}{1- A_n s_\ell} - u_{n-2}(s_\ell) \frac{C_n}{1- A_n s_\ell} \right) + \K_\rho\left(\frac{1}{A_n}\right) g^{(\rho)}(t_n).
\end{equation*}
\EndFor
\end{algorithmic}
\end{algorithm}

\begin{algorithm} 
\caption{\textit{Backward} gCQ with contour quadrature based on trapezoidal rule}
\label{alg3}
\begin{algorithmic}
\State\textbf{Initialization} Generate $\K_{-\rho}(s_{\ell})$ for all contour quadrature nodes $s_\ell$, $\ell = 1,\ldots,N_Q$. Compute $g_1$ from 
\begin{equation*}
	\K_{-\rho}\left(\frac{2}{\Delta_1} \right) g_1 = \phi^{(\rho)}(t_1).
\end{equation*}
\State Set $u_0(s) \equiv 0$.
\For {$n = 2,\ldots,N$}
\vspace{0.1cm}
\State \textbf{1. Trapezoidal step:} apply a step of the trapezoidal rule and compute
\begin{equation*}
u_{n-1}(s_\ell) = u_{n-2}(s_\ell)  \frac{2+\Delta_{n-1}s_\ell}{2-\Delta_{n-1}s_\ell} +  \left(g_{n-2} + g_{n-1}\right) \frac{\Delta_{n-1}}{2-\Delta_{n-1} s_\ell},
\end{equation*}
\State for all contour quadrature nodes $\ell=1,\ldots,N_Q$.
\State \textbf{2. Generate linear system:} if $\Delta_n$ is a new time step, then, generate $\K_{-\rho}\left(\frac{2}{\Delta_n}\right)$; otherwise this operator 
\State was already generated in a previous step. Update the right-hand side
\begin{equation*}
	r_n := \phi^{(\rho)}(t_n) -  \sum_{\ell=1}^{N_Q} w_\ell \K_{-\rho}(s_\ell) u_{n-1}(s_\ell) \frac{2+\Delta_n s_\ell}{2- \Delta_n s_\ell} - \K_{-\rho}\left(\frac{2}{\Delta_{n-1}} \right) g_{n-1}.
\end{equation*}
\State \textbf{3. Linear Solve:} solve the linear system
\begin{equation*}
	\K_{-\rho}\left(\frac{2}{\Delta_n} \right) g_n = r_n.
\end{equation*}
\EndFor
\end{algorithmic}
\end{algorithm}
\begin{algorithm}
\caption{\textit{Backward} gCQ with contour quadrature based on BDF2}
\label{alg4}
\begin{algorithmic}
\State\textbf{Initialization} Generate $\K_{-\rho}(z_{\ell})$ for all contour quadrature nodes $s_\ell$, $\ell = 1,\ldots,N_Q$. Compute $g_1$ from 
\begin{equation*}
	\K_{-\rho}\left(\frac{1}{A_1} \right) g_1 = \phi^{(\rho)}(t_1).
\end{equation*}
Set $u_0(s) \equiv u_{-1}(s) \equiv 0$.
\For{$n=2,\ldots,N$}
\State \textbf{1. BDF2 step:} apply a step of the trapezoidal rule and compute
\begin{align*}
u_{n-1}(s_\ell) = & u_{n-2}(s_\ell) \frac{B_{n-1}}{1-A_{n-1} s_\ell} - u_{n-3}(s_\ell) \frac{C_{n-1}}{1-A_{n-1}s_\ell} + g_{n-1} \frac{A_{n-1}}{1-A_{n-1} s_\ell}
\end{align*}
\State with coefficients \eqref{coefficients}, for all contour quadrature nodes $\ell=1,\ldots,N_Q$.
\State \textbf{2. Generate linear system:}  if $A_n$ is different from the previous coefficients, then generate $\K_\rho\left(\frac{1}{A_n}\right)$; otherwise,
\State this operator was already generated in a previous step. Update the right-hand side
\begin{equation*}
	r_n := \phi^{(\rho)}(t_n) - \sum_{\ell=1}^{N_Q} w_\ell \K_{-\rho}(s_\ell) \left( u_{n-1}(s_\ell) \frac{B_n}{1- A_n s_\ell} - u_{n-2}(s_\ell) \frac{C_n}{1- A_n s_\ell} \right).
\end{equation*}
\State \textbf{3. Linear Solve:} solve the linear system
\begin{equation*}
	\K_{-\rho}\left(\frac{1}{A_n} \right) g_n = r_n.
\end{equation*}
\EndFor
\end{algorithmic}
\end{algorithm}
\section{Conclusion}
We present an improved approach for solving one-sided convolution equations: the gCQ method with variable time stepping based on the trapezoidal rule, which we develop and analyze in this paper. This method builds on the original CQ method, which transforms the continuous equation to the Laplace domain and characterizes the transformed solution as an ODE. In contrast to the CQ method, we introduce variable time stepping for the solution of the ODE, resulting in the gCQ method with improved accuracy and efficiency. Specifically, we utilize the trapezoidal rule for the time stepping in the gCQ method.

To analyze the gCQ method, we develop a theory based on a new formula of quadrature integral and a pointwise error estimate of the weights. The gCQ method is also implemented in a stable algorithmic version based on both the trapezoidal and the BDF2 rules, and we report the results of numerical experiments illustrating the advantages of variable time stepping for non-smooth data. It is worth noting that constructing a quadrature on an appropriate contour in the integral formula \eqref{EQ22} is used for stable computation, as seen in \cite{LopezFernandezSauter2015b}. However, the fast FFT algorithms for the uniform time-step CQ are not available. A study on the stability and convergence of the BDF2 method will be conducted in a future work

\section*{Acknowledgments}
The second author is members of the GNCS group (\textit{Gruppo Nazionale per il Calcolo Scientifico}) of INdAM (\textit{Istituto Nazionale di Alta Matematica ``F. Severi''}). The second author was partially supported from the MIUR grant \textit{Dipartimenti di Eccellenza 2018-2022} (E11G18000350001) of the Italian Ministry for University and Research.

\bibliographystyle{plain}
\bibliography{bibtex_num}

\end{document}